\newtheorem{theorem}{Theorem}[section]
\newtheorem{proposition}[theorem]{Proposition}
\newtheorem{lemma}[theorem]{Lemma}
\newtheorem{claim}[theorem]{Claim}
\theoremstyle{definition}
\theoremstyle{remark}
\newtheorem{remark}[theorem]{Remark}
\numberwithin{equation}{section}
\newtheorem*{ack}{Acknowledgements}
\numberwithin{equation}{section}
\newcommand{\subind}[1]{_{ \scriptscriptstyle #1}  }
\newcommand{\supind}[1]{^ { \scriptscriptstyle #1}  }
\DeclareMathOperator{\Tub}{Tub}
\newcommand{\tub}{\ensuremath{\mathrm{Tub}}}
\newcommand{\F}{\ensuremath{\mathcal{F}}}
\newcommand{\DR}{\ensuremath{r}}
\newcommand{\metric}{\ensuremath{\mathsf{g}}}
\newcommand{\sliceindice}{\ensuremath{\alpha}}
\newcommand{\normalindice}{\ensuremath{l}}
\newcommand{\totalFindice}{\ensuremath{n}}
\newcommand{\totalNindice}{\ensuremath{n}}
\newcommand{\totalMindice}{\ensuremath{m}}
\newcommand{\fol}{\mathcal{F}}
\def\mc{\mathcal}
\def\fol{\mc{F}}
\begin{document}



\title[MCF of SRF: non compact case]{On Mean curvature flow of Singular Riemannian foliations: Non compact cases}


\author[Alexandrino]{Marcos M. Alexandrino}

\author[Cavenaghi]{Leonardo F. Cavenaghi}

\author[Gon\c{c}alves]{Icaro Gon\c{c}alves}


\address[Alexandrino]{
Universidade de S\~{a}o Paulo, 
Instituto de Matem\'{a}tica e Estat\'{\i}stica,
 Rua do Mat\~{a}o 1010,05508 090 S\~{a}o Paulo, Brazil}
\email{marcosmalex@yahoo.de, m.alexandrino@usp.br}
\address[Cavenaghi]{
Universidade de S\~{a}o Paulo, 
Instituto de Matem\'{a}tica e Estat\'{\i}stica,
 Rua do Mat\~{a}o 1010,05508 090 S\~{a}o Paulo, Brazil}
\email{leonardofcavenaghi@gmail.com, kvenagui@ime.usp.br}
\address[Gon\c{c}alves]{Centro de Matem\'atica, Computa\c{c}\~ao e Cogni\c{c}\~ao,
Universidade Federal do ABC, 09.210-170, Santo Andr\'e, Brazil.
}
\email{icaro.goncalves@ufabc.edu.br}

\thanks{Marcos M. Alexandrino was  supported by grant  $\#$2016/23746-6, S\~{a}o 
Paulo Research Foundation (FAPESP).
Leonardo Cavenaghi was  supported by grant  $\#$2017/24680-1, S\~{a}o 
Paulo Research Foundation (FAPESP) and acknowledges his PhD scholarship.
 \'{I}caro Gon\c{c}alves was supported in part by the Coordena\c{c}\~ao de Aperfei\c{c}oamento de
Pessoal de N\'ivel Superior -- Brasil (CAPES) -- Finance Code 001.}

\date{\today}


\subjclass[2000]{}
\keywords{}


\begin{abstract}
In this paper we investigate the  
mean curvature flow (MCF) of a regular leaf of a closed generalized isoparametric foliation 
 as initial datum, generalizing previous results of Radeschi and the first author. 
We show that, under bounded curvature conditions,  any finite time singularity
is a singular leaf, and the singularity is of type I. 
We also discuss the existence of  basins of attraction, 
how cylinder structures can affect convergence of basic MCF of immersed submanifolds 
and  make a few remarks  on MCF of non-closed leaves of generalized isoparametric foliation.

 \end{abstract}

\maketitle





\section{Introduction}

A singular foliation $\F$ on a complete Riemannian manifold $M$ is called \emph{singular Riemannian foliation} (SRF)  if 
every geodesic  perpendicular  to one leaf   is perpendicular to every leaf it meets, see \cite[page 189]{Molino}.
Recall that a leaf of a singular Riemannian foliation is called regular if it has
maximal dimension, and singular otherwise.
In addition,  if the mean curvature vector field along regular leaves is basic, the foliation is called
\emph{generalized isoparametric foliation.}

A typical example of a generalized isoparametric foliation  is  the partition of a Riemannian manifold into the connected components of the orbits of an isometric action (the homogenous examples). Other classical examples are the  families of isoparametric foliations on 
Euclidean and symmetric spaces. In addition, all examples of SRF with closed leaves  on Euclidean or round sphere are \emph{generalized isoparametric}, and 
there are infinitely many  nonhomogeneous examples in these spaces, see \cite{Radeschi-Clifford}.  
For  more detailed information on  generalized isoparametric foliations see
Sections 1 and 2 of \cite{AlexandrinoRadeschi-mean-curvature}.

In  \cite{AlexandrinoRadeschi-mean-curvature}, Radeschi and the first author 
 studied the mean curvature flow of a regular leaf of a generalized isoparametric foliation $\F$ as initial datum
assuming that the ambient space $M$ is compact as well the leaves of $\F$. They proved that 
   any finite time singularity is a singular leaf, and the singularity is of type I, generalizing results of Liu--Terng \cite{Liu-Terng} and Koike \cite{Koike}.

Recall that   a smooth  family of immersions $\varphi_t: L_{0}\to M$,
$t\in[0,T)$  is called a solution of the 
\emph{mean curvature flow} (MCF for short) if $\varphi_t$ satisfies the evolution equation
$$\frac{d}{d t} \varphi_{t}(x)= H(t,x),\qquad  $$
where $H(t,x)$ is the mean curvature of $L(t):=\varphi_{t}(L_{0})$ at $x$. 
We say that the MCF $\varphi_t$ has \emph{initial datum} $L_{0}$. 
By abuse of notation, we will often identify $\varphi_t$ with its image $L(t)$, and we will talk about the MCF flow $L(t)$.
For more details on MCF, see e.g., \cite{survey-Coldingatal}.

In this paper we generalize \cite{AlexandrinoRadeschi-mean-curvature},
 dropping the condition of compactness of $L$ and $M,$ replacing it with other weaker conditions.

\begin{theorem}
\label{theorem-main1}
Let $(M,\fol)$ be a generalized isoparametric foliation with closed  leaves on a complete manifold $M$ so that  
 $M/\F$ is compact. Let $L_0\in \fol$ be a regular leaf of $M$ and let $L(t)$ denote the mean curvature flow evolution of $L_0$ with maximal interval of existence $[0,T)$. Assume that $T<\infty$.  Then the following statements hold:
\begin{enumerate}
\item[(a)]  $L(t)$ converges (in the leaf space sense) to a singular leaf $L_T$ of $\F.$
\item[(b)]  If the curvature of $M$ is bounded and the shape operator along each leaf is bounded, then 
for each $p\in L(0)$  the line integral of MCF 
  $\varphi_{t}(p)$ converges to a point of $L_T$.  In addition 
the singularity is of type I, i.e.,
\[
\limsup_{t\to T^-}\|A_t \|^2(T-t)<\infty,
\]
where $\|A_t \|$ is the sup norm of the second fundamental form of $L(t)$.
\end{enumerate}
\end{theorem}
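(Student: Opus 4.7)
The plan is to follow the blueprint of Alexandrino--Radeschi \cite{AlexandrinoRadeschi-mean-curvature} and substitute the compactness of $M$ with the compactness of the leaf space $M/\fol$. The pivotal structural fact is that, because $\fol$ is generalized isoparametric, the mean curvature vector $H$ of a regular leaf is a basic normal field: it is invariant under the holonomy of $\fol$ and descends to a vector field on the regular part of $M/\fol$. As a consequence, if $L_0$ is a regular leaf then the family $L(t)$ remains saturated by $\fol$ for every $t$, and the flow is faithfully encoded by a continuous curve $\gamma(t)$ in the compact space $M/\fol$.

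For part (a), the first step is to verify this leaf-invariance and obtain the projected curve $\gamma$. Because $M/\fol$ is compact, $\gamma$ has accumulation points as $t\to T^-$. To upgrade accumulation to convergence, one uses that the basic volume functional on nearby regular leaves is strictly monotone along MCF, so $\gamma$ is a gradient-type trajectory of a smooth function on the regular stratum of $M/\fol$; a Lojasiewicz-type inequality on this finite-dimensional quotient (or the explicit normal-slice description available for generalized isoparametric foliations) yields a unique limit $\bar p_T$, representing a leaf $L_T$. If $L_T$ were regular, a uniform tubular neighborhood around it would allow the flow to be extended smoothly past $T$, contradicting maximality of $[0,T)$; hence $L_T$ is singular. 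This part of the argument does not yet use the bounded curvature or bounded shape operator hypotheses, and rests only on the projectability of the flow to $M/\fol$.

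For part (b), the bounded ambient curvature and bounded shape operator play the role that compactness of the leaves played in \cite{AlexandrinoRadeschi-mean-curvature}. Convergence of the integral curve $\varphi_t(p)$ reduces to establishing $\int_0^T |H(t,\varphi_t(p))|\,dt<\infty$; since $H$ is basic, this integral equals the length of $\gamma$ in a metric uniformly equivalent to the induced metric on any fixed regular leaf, and the bounded shape operator hypothesis gives a uniform Lipschitz comparison between the induced metrics on the different $L(t)$'s, which delivers the required length bound and a limit point in $L_T$. For the type I claim I would implement Huisken's parabolic rescaling at the singular time: bounded ambient curvature together with the basic structure of $A_t$ provides enough uniform control to extract a smooth limit flow at parabolic scale, and the algebraic rigidity of isoparametric foliations (as used in the compact case and in the precursor results of Liu--Terng \cite{Liu-Terng} and Koike \cite{Koike}) then forces $\|A_t\|^2(T-t)$ to remain bounded.

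The main obstacle, in both parts, is the absence of compactness of the leaves themselves: uniform maximum principle arguments and blow-up compactness on $L(t)$ are no longer automatic, and one must prevent loss of geometric information at infinity on each leaf. The technical core is therefore to combine bounded curvature of $M$ with the bounded shape operator hypothesis into a uniform two-sided comparison between the time-dependent induced metric on $L(t)$ and a fixed model metric on $L_0$, so that Huisken's monotonicity argument and the gradient-flow convergence scheme can still be executed leaf-wise exactly as in the compact setting.
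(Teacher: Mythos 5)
The structure of your outline matches the paper only at the very start (basicity of $H$, projectability to $M/\fol$, compactness of the quotient giving accumulation points); beyond that you substitute the paper's concrete estimates with high-level devices that you have not verified, and several of these substitutions have genuine gaps.

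For part (a), the paper does not appeal to a Lojasiewicz inequality or to monotonicity of a basic volume functional. Instead it constructs a Sasaki metric $\metric^0$ adapted to a tubular neighborhood of a singular leaf $L_q$ (Lemma~\ref{lemma1-semilocalstructure}) and uses it to prove a sharp two-sided estimate $-\frac{k}{r}-c_1 \leq \mathrm{tr}(A_{\nabla r}) \leq -\frac{k}{r}+c_1$ on the radial part of the shape operator (Lemma~\ref{lemma-constant-R-C}). This yields the ``basins of attraction'' Lemma~\ref{newlemma_r_t}: once the flow enters a small tube around a singular leaf, $r^2(t)$ decreases at a definite linear rate, so it is trapped and reaches the singular leaf in finite time. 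Your proposal never establishes the attraction estimate; a Lojasiewicz-type argument on a stratified quotient with non-compact leaves is not automatic, and the explicit normal-slice description you invoke as an alternative is precisely what the paper develops and you do not.

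For part (b), the assertion that bounded shape operators give a ``uniform Lipschitz comparison between induced metrics'' which implies $\int_0^T |H|\,dt <\infty$ is not justified and cannot be, because $|H|$ genuinely blows up at rate $1/\sqrt{T-t}$; the integrability of $|H|$ is not a pointwise metric comparison but the conclusion of a quantitative estimate. The paper obtains $\|H(t)\|\sqrt{T-t}\le C$ by comparing the actual shape operator to the one computed in the model metric $\metric^0$ (Lemma~\ref{lemma-estimative-H-boundedcurvature}) and combining this with the focal distance estimates $f^0(t)\gtrsim \sqrt{T-t}$ and the exact relation $\|A^0(t)\|_0 = 1/f^0(t)$. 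Likewise, your type~I argument ``parabolic rescaling plus classification of the limit flow'' is circular: one needs a type~I bound in order to perform Huisken's rescaling and extract a self-shrinker limit; it does not prove the bound. The paper's route is direct, chaining the same $\metric^0$-comparison with the focal distance estimate to get $\|A(t)\|\sqrt{T-t}\le C$. In summary, the missing ingredient in your proposal is exactly the semi-local model and the resulting quantitative estimates on $\mathrm{tr}(A_{\nabla r})$, $\|H\|$ and $\|A\|$ near a singular leaf; without them neither the convergence nor the type~I claim is established.
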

\begin{remark}
Since  the leaves of a SRF are  locally equidistant (recall \cite{AlexToeben2}),  
item (a) of the above theorem implies that $L(t)$ converges to a singular leaf $L_T$ in the Gromov-Haudsdorff sense
(recall \cite[Chapter 10]{petersen}). In addition, under bounded curvature conditions (i.e., $M$ has bounded curvature and the
shape operator of each leaf of $\F$ on $M$ is bounded) Lemma \ref{bounded-Tau}  implies  that, for each $\epsilon,$
we can find a small $r_0$ so that the metric projection $\rho:\tub_{r_{0}}(L_T)\to L_T$ 
restricted to $L(t)\subset \tub_{r_{0}}(L_T) $ turns to be an $\epsilon$-isometry i.e., 
$$ | d(x,y)-d(\rho(x),\rho(y))|<\epsilon $$ for each $x,y\in L(t).$ 
\end{remark}
One of the  key observations behind the proof of Theorem  \ref{theorem-main1} is the following useful fact; see Lemma \ref{lemma1-semilocalstructure}.
\begin{lemma}
\label{lemma-introducao-lemma1-semilocalstructure}
 For each SRF  $\F$ with closed leaves on a complete manifold $M$, and
a singular leaf $L_q,$  one can find for a tubular neighborhood $U$ of $L_{q}$ and a (Sasaki) metric $\metric\supind{0}$
so that the restricted foliation
$\F|_{U}$ turns to be a generalized isoparametric foliation on $(U,\metric\supind{0})$, where the principal curvatures
associated to each basic vector field along each regular leaf are constant.   
\end{lemma}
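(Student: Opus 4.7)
The plan is to construct $\metric\supind{0}$ by transporting a Sasaki-type metric from the normal disc bundle of $L_{q}$ and then to verify the isoparametric properties by exploiting the slice decomposition of the SRF near the singular leaf.

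First I would choose $r>0$ so small that the normal exponential map $\exp^{\perp}:D_{r}(\nu L_{q})\to\Tub_{r}(L_{q})=:U$ is a diffeomorphism. Equip $\nu L_{q}$ with the normal connection $\nabla^{\perp}$ induced from the Levi-Civita of $(M,\metric)$, and form on $D_{r}(\nu L_{q})$ the Sasaki metric $\tilde\metric$, defined as the orthogonal sum of $\pi^{\ast}(\metric|_{L_{q}})$ on the horizontal distribution and the linear bundle metric on the vertical distribution. Set $\metric\supind{0}:=(\exp^{\perp})_{\ast}\tilde\metric$. With respect to $\metric\supind{0}$, the metric projection $\rho:U\to L_{q}$ becomes a Riemannian submersion whose fibers $\Sigma_{p}:=\rho^{-1}(p)$ are totally geodesic and isometric to flat Euclidean discs in $\nu_{p}L_{q}$; moreover $\F|_{\Sigma_{p}}$ coincides with the infinitesimal foliation $\F_{p}$ of $\F$ at $p$. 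By the slice theorem for SRFs, $\nabla^{\perp}$-parallel transport along curves in $L_{q}$ produces foliated isometries between distinct slices, so $\rho$ is a \emph{foliated} disc bundle.

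The key observation is then that the leaves of $\F|_{U}$ are horizontally cylindrical for this Sasaki structure: the horizontal lift $X^{h}$ of any vector field on $L_{q}$ is tangent to every leaf $\leaf$, because its flow coincides with $\nabla^{\perp}$-parallel transport, which is foliated. Consequently, for any regular leaf $\leaf$ and $x\in\leaf$ the Sasaki splitting gives the orthogonal decomposition $T_{x}\leaf=T_{x}^{h}\leaf\oplus T_{x}^{v}\leaf$, with $T_{x}^{v}\leaf$ tangent to $\leaf\cap\Sigma_{p}$, and the normal bundle of $\leaf$ in $(U,\metric\supind{0})$ contained in the vertical distribution. For a basic normal field $\xi$, the shape operator $A_{\xi}$ splits cleanly: on $T^{v}\leaf$ it coincides with the shape operator of $\leaf\cap\Sigma_{p}\subset\Sigma_{p}$ in the flat slice; the horizontal-horizontal block vanishes since the vertical component of $\nabla_{X^{h}}X^{h}$ is the O'Neill tensor $A_{X^{h}}X^{h}$, which is zero by skew-symmetry of the O'Neill tensor in its horizontal arguments; and the mixed block vanishes similarly once one invokes horizontal cylindricity. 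Since $(\Sigma_{p},\F_{p})$ is a SRF with closed leaves on a flat Euclidean disc, it is generalized isoparametric, and by the conical structure of infinitesimal foliations (whose restriction to the unit sphere is a classical isoparametric foliation) the principal curvatures along each basic field are constant on every regular leaf. Pulling this back shows that $\F|_{U}$ is generalized isoparametric with respect to $\metric\supind{0}$ with the asserted constancy of principal curvatures.

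The main obstacle is to establish rigorously the foliated-bundle picture — namely that the disc-bundle holonomy preserves $\F_{p}$ as an abstract SRF — together with the resulting horizontal cylindricity of the leaves of $\F|_{U}$. This rests on the SRF slice theorem, combined with the observation that the Sasaki horizontal distribution is precisely intertwined with $\nabla^{\perp}$-parallel transport. Once this structure is in place, the shape-operator computation decouples along $H\oplus V$ and the statement reduces to the isoparametric structure of the infinitesimal foliation on the Euclidean slice.
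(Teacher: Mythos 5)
The strategy (build a Sasaki metric on the normal bundle so that the shape operator along regular leaves reduces to the shape operator of the infinitesimal foliation on the flat slice) is the paper's strategy, but the construction you propose differs from the paper's in a way that creates a genuine gap.

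The paper does \emph{not} take the $\nabla^{\perp}$-horizontal distribution of the normal connection. It takes the distribution $\mathcal{T}$ coming from the \emph{linearized foliation} $\F^{\ell}$ of \cite{Alexandrino-Inagaki-Struchiner, Alexandrino-Radeschi-Molino}: $\mathcal{T}$ is a homothety-invariant distribution that extends $T(L_q)$ and is everywhere tangent to $\F^{\ell}$, hence tangent to every leaf of $\F|_U$. The Sasaki metric $\metric\supind{0}$ is then built so that $\mathcal{T}$ (not the $\nabla^{\perp}$-horizontal space) is orthogonal to the fibers of $\nu(L_q)$. Your step ``the horizontal lift $X^{h}$ of any vector field on $L_q$ is tangent to every leaf, because its flow coincides with $\nabla^{\perp}$-parallel transport, which is foliated'' is the crux, and it is not justified: the slice theorem for SRFs provides foliated diffeomorphisms between slices via the flows of \emph{linearized} vector fields, and these linear isometries $\nu_{p}(L_q)\to\nu_{p'}(L_q)$ need not coincide with $\nabla^{\perp}$-parallel transport. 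Consequently the $\nabla^{\perp}$-horizontal distribution is, in general, neither tangent to the leaves of $\F$ nor foliation-preserving, and the ``horizontally cylindrical'' picture that the rest of your argument relies on breaks down. The paper sidesteps this entirely by taking the leaf-tangent distribution $\mathcal{T}$ as the starting datum and only then declaring it orthogonal to the fibers in $\metric\supind{0}$.

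There is a second, smaller gap in the shape-operator computation. To show the ``horizontal'' block of $A_{\xi}$ vanishes you invoke skew-symmetry of the O'Neill tensor, which only kills the diagonal terms $\metric\supind{0}(A_{\xi}X^{h},X^{h})$. You still need the off-diagonal terms $\metric\supind{0}(A_{\xi}X_{1}^{h},X_{2}^{h})$ to vanish, and this requires knowing that $[X_{1},X_{2}]$ (after lifting) is tangent to the leaf — exactly what the paper gets from $\mathcal{T}$ being tangent to $\F^{\ell}$ (see Eq.\ \eqref{eq3-lemma1-semilocalstructure}), and what you cannot get from the $\nabla^{\perp}$-horizontal distribution. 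To repair the proof, replace the $\nabla^{\perp}$-horizontal distribution by the linearized distribution $\mathcal{T}$ throughout, and replace the O'Neill skew-symmetry argument by the observation that brackets of $\mathcal{T}$-sections stay tangent to $\F^{\ell}\subset\F$.
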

\begin{remark}
From the proof of the above lemma, one can check that 
 the holonomy foliation (with compact holonomy) restricted to the unit  bundle and 
others more elaborate examples
presented in \cite{Alexandrino-Inagaki-Struchiner} 
fulfill the hypothesis of Theorem \ref{theorem-main1}.
\end{remark}

As in \cite{AlexandrinoRadeschi-mean-curvature}, 
the main idea of the proof of item (a) of Theorem  \ref{theorem-main1} is to
assure the existence of \emph{basins of attraction}. More precisely, 
we have that \emph{for each singular leaf $L_{q}\in \F$ there exists a small tubular neighborhood $\Tub_{\epsilon}(L_{q})$ 
so that for each regular leaf $L(t_{0})$ contained in  $\Tub_{\epsilon}(L_{q})$, the MCF $t\to L(t)$
continues to stay in $\Tub_{\epsilon}(L_{q})$ for $t>t_{0}$. In addition if $L(t_{0})\subset\Tub_{\epsilon}(L_{q})$ we have  $T< \infty$;} see Lemma \ref{newlemma_r_t} for details.   

As we  remark in Section \ref{sectionMCFNoclosed}, Lemma \ref{newlemma_r_t}   can be adapted to the case of SRF 
with non-closed leaves. As a simple application, \emph{we can assure convergence of MCF of $t\to L(t)$ when $T<\infty$ and $M$ is compact,}
see Proposition \ref{proposition-SRFnoclosed}.

Under bounded curvature conditions, another  adaption of  Lemma \ref{newlemma_r_t} 
can also be  useful to prove \emph{the convergence of MCF of an  immersed  submanifold $N$
contained in a regular leaf  as initial datum, when  the MCF of $N$ 
 can be extented to a basic flow of $\F$},  see  Proposition \ref{theorem-main3}. 
This adaption of Lemma \ref{newlemma_r_t}, and hence the proof of Proposition \ref{theorem-main3}, will follow direct from  
 the estimate in Lemma \ref{prop:submanifold}, an interesting remark of  immersion theory
 that we could not find in the classical literature. 
It states that \emph{given a Riemannian manifold $(M,\metric)$ with bounded curvature 
and an  embedded submanifold $L$ with bounded shape operator,
we can have a control of the trace of the shape operator $A_{\nabla r}$ of 
a immersed submanifold $N\subset \partial \mathrm{Tub}_{\epsilon}(L)$  with respect to the gradient of the distance $r$ to $L$,
as long as, we have a well defined tubular neighborhood 
$\mathrm{Tub}_{\epsilon}(L)$ of $L,$ for a small $\epsilon$.}

This	 paper is divided as follows. 
In Section \ref{section-sasakimetric} we prove Lemma \ref{lemma-introducao-lemma1-semilocalstructure} 
that will be important in the proof of  Theorem \ref{theorem-main1} presented in Sections    
\ref{sectionProofitem-a-of-TheoremA} and \ref{section-Proof-ofitem-B-bounded}. 
In Section \ref{sectionMCFNoclosed} we remark a few results on MCF of SRF with non-closed leaves, and  
Proposition \ref{proposition-SRFnoclosed} is presented.  
Finally, in Section \ref{section-onvergenceMCFviacomparisonlemma} 
we prove Proposition \ref{theorem-main3} via  the estimate in Lemma \ref{prop:submanifold}.
In the appendix we also present  
 Lemma \ref{bounded-Tau}   concerning the behavior of the distribution $\mathcal{T}$
used to define the Sasaki metric in Lemma \ref{lemma-introducao-lemma1-semilocalstructure}.
This lemma play a role in the proof of item (b) of Theorem \ref{theorem-main1} and may be relevant
in  future studies of SRF under bounded curvature conditions. 


\section{The distribution $\mathcal{T}$ and the Sasaki metric }
\label{section-sasakimetric}

As discussed in \cite{Alexandrino-Inagaki-Struchiner,Alexandrino-Radeschi-Molino},  
given a closed leaf $L_q$ we can find a 
$\F$-saturated tubular neighborhood  $U=\tub_{\epsilon}(L_{q})$ of $L_q$
and a subfoliation $\F^{\ell}\subset \F|_{U}$
(the \emph{linearized foliation}) that is 
the maximal infinitesimal homogenous subfoliation
of $\F$. In other words, if $\rho:U\to L_q$ is the
metric projection, and $S_p=\rho^{-1}(p)$ is the
slice (i.e., $S_{p}:=\exp_{p}(\nu_{p}L\cap B_{\epsilon}(0))$), then $\F_{p}^{\ell}=S_{p}\cap \F^{\ell}$
is the maximal homogenous subfoliation of 
the infinitesimal foliation $\F_{p}=S_{p}\cap \F.$ 
The infinitesimal foliation $\F_p$
turns to be a SRF on the Euclidean space
$(S_p,g_p)$ if we identify $S_p$ 
via the exponential map with an open set of $\nu_p(L_q)$
with the flat metric $g_p$. 
In addition, we can find a distribution  $\mathcal{T}$ homothetic invariant (with respect to
$\rho$) that is tangent to $\F^{\ell}$ and extends $T(L_q).$

Set  $U\supind{0}:=\exp^{-1}(U).$ The distribution and the foliation on $U\supind{0}$
defined by the pullback of $\mathcal{T}$ and $\F^{\ell}$ through 
the normal exponential map will also be denoted by $\mathcal{T}$ and $\F^{\ell}.$
Let $\metric\supind{0}$  be the \emph{associated Sasaki metric}, i.e., 
the metric so  that $\mathcal{T}$ is orthogonal to $\nu(L_q),$ the foot point projection   
 $\rho\supind{0}:(\nu(L_q),\metric\supind{0})\to (L_q,g)$ is a Riemannian submersion and
the fibers $\nu_{p}(L_{q})$ have the flat metric $\metric_{p}$. 
The foliation $\F\supind{0}:=(\exp^{\nu})^{-1}(\F|U)$ turns to be a SRF with respect to $\metric\supind{0}$
on $U\supind{0}$ and by homothetic transformation it can be extended to $\nu(L_{q})$.  
Let us denote $\nabla\supind{0}$ the Riemannian connection associated to $\metric\supind{0}.$

Now we present a useful  application of the above discussion. 

\begin{lemma}
\label{lemma1-semilocalstructure}
Let $\F$ be a SRF with closed leaves and $L_{q}$ be a singular leaf. Consider the 
SRF $\F\supind{0}$ on  $(\nu(L_{q}), \metric\supind{0})$ defined above. Then
\begin{enumerate}
\item[(a)] $A_{\xi}\supind{0}|_{\mathcal{T}}=0,$ where $\xi$ is a normal vector field along a regular leaf  $L_x$. 
\item[(b)] The principal curvatures associated to basic vector fields along regular leaves of $\F\supind{0}$ 
are constant. In particular $\F\supind{0}$ is  a generalized isoparametric foliation. 
\item[(c)] The principal directions associated to non zero curvatures are tangents to the fibers of $\nu(L_q).$ 
\item[(d)] $\nabla\supind{0}\xi |_{\mathcal{T}}=0,$ if $\xi$ is the gradient of the distance function 
 $r(x)=d^{0}(L_{q},x)$ i.e.,  the distance between $x$ and $L_q$ with respect to the metric $\metric\supind{0}$.
\end{enumerate} 
\end{lemma}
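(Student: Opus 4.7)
The plan is to read $(\nu(L_q),\metric\supind{0},\mathcal{T})$ as a Riemannian submersion structure with totally geodesic flat fibers and to translate the homothetic invariance of $\mathcal{T}$ into a bracket identity from which all four items follow. By construction $\rho\supind{0}\colon(\nu(L_q),\metric\supind{0})\to(L_q,g)$ is a Riemannian submersion whose horizontal distribution is $\mathcal{T}$ and whose fibers $\nu_p(L_q)$ are flat Euclidean spaces which, as in the classical Sasaki construction on vector bundles, are totally geodesic. Since $\mathcal{T}\In T\F^{\ell}\In T\F\supind{0}$, at each point of a regular leaf $L_x$ of $\F\supind{0}$ the distribution $\mathcal{T}$ is tangent to $L_x$; hence every $\xi\in\nu\supind{0}(L_x)$ is vertical. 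The hypothesis that $\mathcal{T}$ is homothetic invariant with respect to the Euler flow $\phi_s(v)=e^sv$, with generator $E=r\partial_r$, means $(\phi_s)_*\mathcal{T}=\mathcal{T}$, equivalently $[\tilde X,E]=0$ for every horizontal lift $\tilde X\in\mathcal{T}$ of a vector field on $L_q$; since $\tilde X(r)=0$, this simplifies to $[\tilde X,\partial_r]=0$.

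\medskip

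Item (d) then follows immediately: $r$ is the fiberwise Euclidean norm so $\nabla\supind{0}r=\partial_r$ is the unit vertical radial field; $\metric\supind{0}(\nabla\supind{0}_{\tilde X}\partial_r,\partial_r)=0$ because $|\partial_r|\equiv 1$; the horizontal component vanishes by a Koszul-formula computation using $[\tilde X,\partial_r]=0$ and the orthogonality $\metric\supind{0}(\partial_r,\tilde Y)=0$ for every horizontal $\tilde Y$; and the remaining vertical component vanishes because fibers are totally geodesic, flat, and $\partial_r$ is their radial unit field. For (a) and (c), given vertical $\xi\in\nu\supind{0}(L_x)$ and $v\in\mathcal{T}\In TL_x$, I would decompose the tangential part of $\nabla\supind{0}_v\xi$ into its horizontal ($\mathcal{T}$) and vertical-tangent pieces: using a $\partial_r$-invariant basic extension of $\xi$, whose existence comes from the homothetic invariance of $\mathcal{T}$, the bracket $[\tilde X,\xi]$ is vertical and the horizontal piece vanishes by the O'Neill $A$-tensor formula, while the vertical-tangent piece vanishes because $\F\supind{0}$ restricted to the flat fiber $\nu_p(L_q)$ coincides with the infinitesimal foliation $\F_p$ and $\xi$ is normal, inside the fiber, to the regular leaf of $\F_p$. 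This proves (a), and (c) is the immediate consequence that non-zero principal directions of $A_\xi\supind{0}$ must lie in the fiber-tangent part of $TL_x$.

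\medskip

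For (b), items (a) and (c) identify $A_\xi\supind{0}$ along a basic normal field $\xi$ with the shape operator of the corresponding regular leaf of $\F_p$ inside the flat slice $(\nu_p(L_q),\metric_p)$. By the known classification of SRF with closed leaves on Euclidean space, the infinitesimal foliation $\F_p$ is isoparametric, so the principal curvatures along basic normal fields are constant within each fiber; they remain constant as $p$ varies along $L_q$ because the horizontal holonomy of $\rho\supind{0}$ preserves the fiberwise foliation, by homothetic invariance of $\mathcal{T}$ combined with the SRF property of $\F\supind{0}$. In particular $\F\supind{0}$ is generalized isoparametric.

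\medskip

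The main obstacle is the extension step used inside the proof of (a): showing that every basic normal field along a regular leaf $L_x$ admits a $\partial_r$-invariant extension. This is where one genuinely uses that $\F^{\ell}$ is the \emph{maximal} infinitesimal homogeneous subfoliation of $\F$, so that its tangent distribution $\mathcal{T}$ carries precisely the radial symmetry needed to promote the bracket identity $[\tilde X,\partial_r]=0$ from the radial field to arbitrary basic normal fields; everything else is an O'Neill-style submersion computation.
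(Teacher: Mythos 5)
Your submersion-theoretic framing (horizontal distribution $\mathcal{T}$, totally geodesic flat fibers, and the bracket identity $[\tilde X,\partial_r]=0$ from homothetic invariance plus uniqueness of horizontal lifts) matches the paper's setup, and your derivation of item (d) by a direct Koszul/O'Neill computation is a legitimate alternative to the paper's route, which instead runs a geodesic-variation argument to show $\nabla\supind{0}_J\xi=J'\in\mathcal{T}$ and then invokes item (a). Do note, though, that in your Koszul computation for the horizontal component of $\nabla\supind{0}_{\tilde X}\partial_r$ the surviving term is $\metric\supind{0}([\tilde Y,\tilde X],\partial_r)$, and killing it requires that $[\tilde X,\tilde Y]$ be tangent to $\F^\ell$ (so orthogonal to the normal field $\partial_r$); the two ingredients you cite ($[\tilde X,\partial_r]=0$ and $\metric\supind{0}(\partial_r,\tilde Y)=0$) alone are not enough.

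The real gap is in item (a), on which everything else hinges. You decompose the tangent-to-leaf part of $\nabla\supind{0}_v\xi$, $v\in\mathcal{T}$, into a $\mathcal{T}$-piece and a slice-tangent piece. Killing the $\mathcal{T}$-piece via the O'Neill $A$-tensor together with the fact that $[\tilde X_1,\tilde X_2]$ is tangent to $\F^\ell$ (hence orthogonal to $\xi$) is exactly what the paper does and is fine. But your argument for the slice-tangent piece does not go through: that $\xi$ is, at the point, normal to the fiberwise leaf of $\F_p$ says nothing a priori about the slice-tangent component of $\nabla\supind{0}_{\tilde X}\xi$, and even with a $\partial_r$-invariant ``basic'' extension making $[\tilde X,\xi]$ vertical, you are left with $\metric\supind{0}([\tilde X,\xi],V)$ for slice-tangent $V$, which you have not shown to vanish. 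What actually closes the argument (and what the paper's proof states explicitly) is the $A\supind{0}_\xi$-invariance of the slice-tangent subspace $T_x\big(\nu_{\rho(x)}(L_q)\cap L_x\big)$: since the slice $S=\nu_{\rho(x)}(L_q)$ is totally geodesic and both $\xi$ and slice-tangent vectors lie in $TS$, one has $\nabla\supind{0}_V\xi\in TS$, so $A\supind{0}_\xi$ preserves $TS\cap TL_x$; by symmetry of the shape operator it therefore preserves the orthogonal complement $\mathcal{T}_x$ inside $T_xL_x$ as well. Combined with the vanishing of the $\mathcal{T}$-to-$\mathcal{T}$ block this forces $A\supind{0}_\xi|_{\mathcal{T}}=0$, and it is also precisely what makes (c) immediate. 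Finally, in (b) the constancy of principal curvatures inside the Euclidean slice should be cited as in the paper (Remark~3.2 of Alexandrino--Radeschi, \emph{Isometries between leaf spaces}) rather than as a ``classification'' of SRF on Euclidean space, which does not exist.
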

\begin{proof}


Once $\metric\supind{0}$ is a Sasaki metric, 
 the fibers of $\nu(L_{q})$ are totally geodesics and  isometric to each other. Therefore 
the space $T_{x}\big(\nu_{\rho(x)}(L_{q})\cap L_{x})$ is $A_{\xi}\supind{0}$-invariant and 
hence the distribution $\mathcal{T}_{x}$ is also $A_{\xi}\supind{0}$-invariant. Also recall that
\emph{the principal curvatures associated to basic vector fields along regular leaves of
the infinitesimal foliation $\F\supind{0}\cap \nu_{\rho(x)}(L_{q})$  
are constant}; see \cite[Remark 3.2]{AlexandrinoRadeschi-I}. These facts together imply that items (b) and (c) of the lemma will be proved once we have checked 
item (a).

Given vector fields $X_1$ and $X_2$ on $L_q$, consider their lifts $X_{i}^{\tau}$ tangent to $\mathcal{T}$ and
$\xi$ a normal vector field along $L_x$. 
Let us also denote $\nabla^{b}$ the induced Riemannian  connection on $L_q$. 
As $\rho\supind{0}$ is a Riemannian submersion we have
\begin{equation}
\label{eq1-lemma1-semilocalstructure}
\nabla_{X_{1}^{\tau}}\supind{0}X_{2}^{\tau}= \big( \nabla_{X_1}^{b}X_{2} \big)^{\tau}+\frac{1}{2}[X_{1}^{\tau},X_{2}^{\tau}]^{v}. 
\end{equation}
Since $\mathcal{T}$ is tangent to $L_x$ and $\xi$ is orthogonal to $L_x$ we infer that:
\begin{equation}
\label{eq2-lemma1-semilocalstructure}
\metric\supind{0}(\xi,\big( \nabla_{X_1}^{b}X_{2} \big)^{\tau})=0.
\end{equation}
Since the (possible nointegrable) distribution $\mathcal{T}$ is tangent to $\F^{\ell}\subset\F,$ we have that  
$[X_{1}^{\tau},X_{2}^{\tau}]$ is tangent to $\F^{\ell}$ and hence:
\begin{equation}
\label{eq3-lemma1-semilocalstructure}
\metric\supind{0}(\xi,\frac{1}{2}[X_{1}^{\tau},X_{2}^{\tau}]^{v})=0.
\end{equation}
From Eq. \eqref{eq1-lemma1-semilocalstructure} \eqref{eq2-lemma1-semilocalstructure} \eqref{eq3-lemma1-semilocalstructure}
we conclude that
\begin{equation}
\label{eq4-lemma1-semilocalstructure}
\metric\supind{0}(A_{\xi}\supind{0}(X_{1}^{\tau}), X_{2}^{\tau})= \metric\supind{0}(\xi,\nabla_{X_{1}^{\tau}}\supind{0}X_{2}^{\tau})=0.
\end{equation}
This  finishes the proof of item (a) and hence the proof of items (b) and (c), as discussed above.  
 
In order to prove item (d), consider the geodesic variations $f(s,t)=\exp_{\beta(t)}(s\xi(t))$ so that
\begin{equation}
\label{eq-1-item-d-lemma}
\frac{\partial f}{\partial t}(s,0)=J(s)\in \mathcal{T}; 
\end{equation}
here $t\to \beta(t)$ is a curve in $L_{q}$ and 
$t\to \xi(t)$ is a unit normal field along $L_q$. Set $\gamma(s):=f(s,0).$ 
Since $\mathcal{T}$ is orthogonal to the totally geodesic submanifold $\nu_{q}(L),$ 
we conclude from Eq. \eqref{eq-1-item-d-lemma} that $J'(s)\in\mathcal{T}_{\gamma(s)}$. 
Therefore 
\begin{equation}
\label{eq-2-item-d-lemma}
\nabla\supind{0}_{\frac{\partial f}{\partial t}(s,0)}\xi=\frac{\nabla\supind{0}}{\partial t}\frac{\partial f}{\partial s}(s,0) =\frac{\nabla\supind{0}}{\partial s}\frac{\partial f}{\partial t}(s,0)= J'(s)\in\mathcal{T}_{\gamma(s)}
\end{equation} 
Equation \eqref{eq-2-item-d-lemma} and item (a) imply item (d) of the lemma. 

\end{proof}


\section{Proof of item (a) of Theorem \ref{theorem-main1}}
\label{sectionProofitem-a-of-TheoremA}

\subsection{A new  estimate of the shape operator}
\label{subsection-shapeoperatorestimative}

In this section we  generalize   a estimate  of 
\cite{AlexandrinoRadeschi-mean-curvature} that will 
 allow us to prove item (a) of Theorem \ref{theorem-main1}.

Let us start by  fixing some notations that will be used in the proof of Lemma \ref{lemma-constant-R-C}.
Given the original metric $\metric$ on $M$,  the metric on 
a  neighborhood $U\supind{0}:=\exp^{-1}(\Tub_{\epsilon}(L_{q}))$  of the null section of $\nu(L_{q})$ defined by the pullback of $\metric$ 
via  
the normal exponential map will also be denoted by $\metric$.   
Let
$\nabla$ be the Riemannian connection associated to $\metric$ on $U\supind{0}$. 
 Consider  the connection $\nabla\supind{0}$ associated to $\metric\supind{0}$  and set
$\omega:=\nabla- \nabla\supind{0}$.     
Consider an orthonormal basis $\{ e_{\totalFindice}\}$  of $T_{x}L_{x}$ with respect to the original metric $\metric$ so that
$e_{\sliceindice}\in T_{x}(\nu_{\rho(x)}(L_{q})\cap L_{x})$ (for $\sliceindice=1\cdots k$) and 
$e_{\normalindice}\in T_{x}L_{x}$  (for $\normalindice=k+1\cdots \dim \F$). 

\begin{lemma}
\label{lemma-constant-R-C}
Let $L_q$ be a closed singular leaf. Then  
there exist  a  radius $\epsilon$ and constant $c_1>0$ 
such that in the tubular neighborhood $\tub_{\epsilon}(L_q)$ the following
equation holds true:
\begin{equation}\label{Lemma-bounds_S}
-\frac{k}{\DR(x)}-c_1\leq {\rm tr} (A_{\nabla \DR})_{x} \leq -\frac{k}{\DR(x)}+c_1,
\end{equation}
where $r(x)=d(L_{q},x)$ is the distance between the regular points $x\in L(t)\subset \tub_{\epsilon}(L_q)$ and the singular leaf $L_q$
and $k=\dim \F-\dim L_{q}$.  
\end{lemma}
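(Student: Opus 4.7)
The plan is to compare the shape operator trace in the original metric $\metric$ with its counterpart in the Sasaki metric $\metric\supind{0}$ introduced in Section~\ref{section-sasakimetric}, where Lemma~\ref{lemma1-semilocalstructure} yields an exact expression of $-k/r$, and then to bound the discrepancy between the two metrics by a constant on a sufficiently small tubular neighborhood of $L_q$. First, I would observe that the distance function $r(x)=d(L_q,x)$, viewed on $U\supind{0}=\exp^{-1}(\tub_{\epsilon}(L_q))$ via the normal exponential map, coincides with the distance to the zero section measured in $\metric\supind{0}$: both reduce to the norm $|v|$ on each fiber $\nu_p(L_q)$, by the Gauss lemma on the $\metric$ side and by the flat-fiber Riemannian submersion structure on the $\metric\supind{0}$ side. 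Consequently $\nabla r=\nabla\supind{0} r$ is the unit fiber-radial vector in either metric. Moreover, since the two metrics agree on the zero section and induce the same natural inner product on each fiber at its base point, one has $\metric-\metric\supind{0}=O(r^2)$ and the difference tensor $\omega:=\nabla-\nabla\supind{0}$ is $O(r)$ uniformly on $\tub_{\epsilon}(L_q)$.

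Next, I would compute $\operatorname{tr}(A\supind{0}_{\nabla r})$ exactly. Splitting $T_xL_x=V_x\oplus\mathcal{T}_x$, with $V_x:=T_x(\nu_{\rho(x)}(L_q)\cap L_x)$ of dimension $k$, Lemma~\ref{lemma1-semilocalstructure}(d) gives $A\supind{0}_{\nabla r}|_{\mathcal{T}_x}=0$. On $V_x$ the leaf $L_x\cap\nu_{\rho(x)}(L_q)$ sits inside the Euclidean sphere of radius $r$ in the flat fiber (because the origin represents the singular leaf of the infinitesimal foliation $\F_p$ and SRF leaves are locally equidistant from singular leaves, see \cite{AlexToeben2}), so the shape operator restricts to $-\tfrac{1}{r}\operatorname{Id}$. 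Summing over a $\metric\supind{0}$-orthonormal frame of $T_xL_x$ adapted to the splitting produces the exact value $\operatorname{tr}(A\supind{0}_{\nabla r})=-k/r$.

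Finally, to control the difference $\operatorname{tr}(A_{\nabla r})-\operatorname{tr}(A\supind{0}_{\nabla r})$, I would use the identity $\operatorname{tr}(A_\xi)=-\sum_{\totalFindice}\metric(\nabla_{e_{\totalFindice}}\xi,e_{\totalFindice})$ for $\xi=\nabla r$ and substitute $\nabla=\nabla\supind{0}+\omega$ together with $\metric=\metric\supind{0}+O(r^2)$. Each term then picks up a correction of the form $\metric(\omega(e_{\totalFindice},\nabla r),e_{\totalFindice})+O(r^2)\cdot\metric\supind{0}(\nabla\supind{0}_{e_{\totalFindice}}\nabla r,e_{\totalFindice})$. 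Because $\omega=O(r)$, while $|\nabla\supind{0}_{e_{\totalFindice}}\nabla r|=O(1/r)$ on $V_x$ and vanishes on $\mathcal{T}_x$, each such correction is $O(r)$; summing over the $\dim\F$ basis vectors yields a uniformly bounded error, which provides the constant $c_1$ after choosing $\epsilon$ small.

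The main obstacle will be making the uniform $O(r)$ estimate on $\omega$ (together with the orthonormal-frame discrepancy near $L_q$) fully rigorous, and checking that the singular $-k/r$ contribution supported on $V_x$ is not polluted by spurious $1/r$ terms coming from the metric mismatch. The radial-isometry property of the exponential map is crucial here: it forces $\nabla r=\nabla\supind{0}r$ to remain pristine in both metrics and ensures that the dangerous radial corrections cancel to leading order.
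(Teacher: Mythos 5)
Your high-level strategy is the same as the paper's: introduce the Sasaki metric $\metric\supind{0}$ from Section~\ref{section-sasakimetric}, observe that $\nabla r=\nabla\supind{0}r$, use Lemma~\ref{lemma1-semilocalstructure} to get an exact $-k/r$ contribution from the slice directions, and absorb the discrepancy between $\nabla$ and $\nabla\supind{0}$ into a bounded constant. There are, however, two points where your proposal diverges in a way that matters.

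\emph{Unjustified asymptotics on $\omega$ and the metrics.} You claim $\metric-\metric\supind{0}=O(r^2)$ and $\omega:=\nabla-\nabla\supind{0}=O(r)$ because the two metrics ``agree on the zero section and induce the same natural inner product on each fiber at its base point.'' Agreement at $r=0$ only forces the difference to vanish at zeroth order, not first order: in Fermi coordinates the tangential components of $\metric$ carry a \emph{linear} correction in $r$ governed by the second fundamental form of $L_q$, which is generally nonzero, while the Sasaki metric's horizontal part is locked to the base metric. So the honest statement is $\metric-\metric\supind{0}=O(r)$ and $\omega=O(1)$. Fortunately this weaker estimate is all the lemma needs, but the $O(r^2)/O(r)$ claim as stated is wrong and your appeal to it as a mechanism for canceling ``dangerous radial corrections'' at leading order is not available. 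The paper avoids this entirely: it works with a single $\metric$-orthonormal frame $\{e_n\}$, splits $\mathrm{tr}\,A_{\nabla r}$ into the $\nabla\supind{0}$ part and a correction $\mathrm{tr}\,\omega(\cdot)\nabla r$, and never needs any vanishing of $\omega$ along the zero section.

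\emph{The ``uniformly on $\tub_{\epsilon}(L_q)$'' clause, which is the actual content of the lemma.} You assert the bound on $\omega$ (and hence on the error term) holds uniformly over all of $\tub_{\epsilon}(L_q)$, but you give no argument. In the setting of Theorem~\ref{theorem-main1} the leaf $L_q$ is closed but \emph{not} assumed compact, so $\tub_{\epsilon}(L_q)$ is not relatively compact and pointwise boundedness of $\omega$ on compact pieces does not automatically globalize. This is precisely the issue the lemma was reworked to handle in passing from the compact case of \cite{AlexandrinoRadeschi-mean-curvature}. The paper resolves it with a basicness argument that your write-up omits: once Eq.~\eqref{eq-1-lemma-constant-R-C} gives $\mathrm{tr}\,A_{\nabla r}=-k/r-\mathrm{tr}\,\omega(\cdot)\nabla r$, both $\mathrm{tr}\,A_{\nabla r}=\metric(H,\nabla r)$ (by the generalized isoparametric hypothesis, since $H$ and $\nabla r$ are basic) and $-k/r$ are basic, hence so is $\mathrm{tr}\,\omega(\cdot)\nabla r$. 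Basicness means it is determined by its values over a relatively compact neighborhood of $q$, where it is plainly bounded, and this bound propagates to all of $\tub_{\epsilon}(L_q)$. Without this step your estimate \eqref{eq-2-lemma-constant-R-C} does not hold, and the proof does not close when $L_q$ is non-compact.

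A smaller remark: you compute $\mathrm{tr}\,A\supind{0}_{\nabla r}=-k/r$ with a $\metric\supind{0}$-orthonormal frame (this computation is correct), but the target $\mathrm{tr}\,A_{\nabla r}$ is taken with a $\metric$-orthonormal frame. You acknowledge the ``orthonormal-frame discrepancy'' but do not quantify it. The paper's frame convention, $e_\alpha\in T_x(\nu_{\rho(x)}(L_q)\cap L_x)$ for $\alpha\le k$ completed to a $\metric$-orthonormal basis of $T_xL_x$, lets it exploit $\metric(e_l^\nu,e_l)=0$ for $l>k$, killing the middle sum outright and making the computation frame-conversion-free.
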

\begin{proof}

Let $\xi$ be the gradient of the distance function $r$ with respect to 
$\metric$ (or with respect to $\metric\supind{0}$ that gives the same gradient for the function $r$).    

\begin{eqnarray*}
\mathrm{tr}A_{\xi} & = & - \sum_{\totalFindice=1}^{\dim \F} \metric(\nabla_{e_{\totalFindice}}\xi, e_{\totalFindice})\\
& = & -\sum_{\sliceindice=1}^{k}\metric (\nabla_{e_{\sliceindice}}\supind{0}\xi, e_{\sliceindice}) 
-\sum_{\normalindice=k+1}^{\dim \F}\metric (\nabla_{e_{\normalindice}}\supind{0}\xi, e_{\normalindice})\\
& - & \sum_{\totalFindice=1}^{\dim \F}\metric(\omega(e_{\totalFindice})\xi, e_{\totalFindice})
\end{eqnarray*}

Now let us examine each of the above terms. 
We know from Euclidean geometry that 
$$  -\sum_{\sliceindice=1}^{k}\metric (\nabla_{e_{\sliceindice}}\supind{0}\xi, e_{\sliceindice})=
-\sum_{\sliceindice=1}^{k} \metric(\frac{1}{r}e_{\sliceindice}, e_{\sliceindice})
= - \frac{k}{r}.$$ 
For $k+1\leq \normalindice\leq \dim \F$ set  $e_{\normalindice}=e_{\normalindice}^{\nu}+e_{\normalindice}^{\tau}$ where 
$e_{\normalindice}^{\nu}\in T_{x}(\nu_{\rho(x)}(L_{q})\cap L_{x})$ and $e_{\normalindice}^{\tau}\in\mathcal{T}.$ 
Also recall from  Lemma \ref{lemma1-semilocalstructure} that $\nabla\supind{0}\xi|_{\mathcal{T}}=0.$  
Then  
\begin{eqnarray*}
-\sum_{\normalindice=k+1}^{\dim \F}\metric (\nabla_{e_{\normalindice}}\supind{0}\xi, e_{\normalindice})&= & 
-\sum_{\normalindice=k+1}^{\dim \F}\metric (\nabla_{e_{\normalindice}^{\nu}}\supind{0}\xi, e_{\normalindice})-\sum_{\normalindice=k+1}^{\dim \F}\metric (\nabla_{e_{\normalindice}^{\tau}}\supind{0}\xi, e_{\normalindice})\\
 & = & - \frac{1}{r}\sum_{\normalindice=k+1}^{\dim \F}\metric (e_{\normalindice}^{\nu}, e_{\normalindice}) + 0=0,
\end{eqnarray*}
where the last equality follows from the fact that $\metric(X,e_{\normalindice})=0$
for each $X\in T_{x}(\nu_{\rho(x)}(L_{q})\cap L_{x}).$

From the equations above we infer that

\begin{equation}
\label{eq-1-lemma-constant-R-C}
\mathrm{tr}\, A_{\xi}= -\frac{k}{r} -\mathrm{tr}\, \omega(\cdot)\xi.
\end{equation}
Equation \eqref{eq-1-lemma-constant-R-C} implies that $\mathrm{tr}\, \omega(\cdot)\xi$ is basic. 
On the other hand, in a relative compact neighborhood of 
 $q$  
\begin{equation}
\label{eq-2-lemma-constant-R-C}
-c_{1}\leq -\mathrm{tr}\,\omega(\cdot)\xi\leq c_{1}.
\end{equation}

Now Eq. \eqref{Lemma-bounds_S} follows from 
 \eqref{eq-1-lemma-constant-R-C} and \eqref{eq-2-lemma-constant-R-C}.

\end{proof}


\subsection{Revised proof}
\label{subsectionproof-item (a)}

Once we have proved  Lemma \ref{lemma-constant-R-C}, 
the proof of item (a) of Theorem \ref{theorem-main1} follows from  
 the same arguments as in  \cite{AlexandrinoRadeschi-mean-curvature}. 
For the sake of completeness let us briefly recall
 these arguments extracted  from \cite{AlexandrinoRadeschi-mean-curvature}.

\begin{lemma}[Basins of attraction \cite{AlexandrinoRadeschi-mean-curvature}]
\label{newlemma_r_t}
Let $\F$ be a generalized isoparametric foliation with closed leaves   on a complete Riemannian manifold $M$. 
Let $L_q$ be a singular leaf. Then there exists  a neighborhood $\tub_{\epsilon}(L_{q})$ around $L_{q}$ with radius 
$\epsilon$ small enough such that if the initial data $L(t_{0})\subset \tub_{\epsilon}(L_{q})$ then the following properties hold true:
\begin{enumerate}
\item[(a)] Let $L(t)$ be the MCF with initial data $L(t_{0})$ and $r(t)$ the distance between $L(t)$ and $L_q$. Then 
\begin{equation}
\label{eq-0-lemma_r_t}
C_{1}^{2}(t-t_{0})\leq \DR^{2}(t_{0})-\DR^{2}(t)\leq C_{2}^{2}(t-t_{0}),
\end{equation} 
where $C_1$ and $C_2$ are positive constants that depend only on $\tub_{\epsilon}(L_{q})$.
\item[(b)] $T< \infty$ and $L(t)\subset \tub_{\epsilon}(L_{q})$ for all $t>t_0.$  
\item[(c)] If $L(t)$ converges to $L_q$ then
\begin{equation}
\label{eq-1-lemma_r_t}
C_1\sqrt{T- t}\leq \DR(t)\leq C_2\sqrt{T- t}.
\end{equation}

\end{enumerate}
\end{lemma}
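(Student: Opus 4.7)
The plan is to convert the MCF equation into a controlled scalar ODE for the squared distance $r^{2}(t) := d(L(t), L_{q})^{2}$, with Lemma~\ref{lemma-constant-R-C} providing the driving estimate. First I would use that in a generalized isoparametric foliation the mean curvature vector $H$ along any regular leaf is basic, so MCF of a regular leaf is leaf-preserving and each $L(t)$ is a regular leaf of $\F$. In particular $r(t) := d(L(t), L_{q})$ is a single well-defined distance, and since $r$ is $\F$-basic so is $\nabla r$. From $\tfrac{d}{dt}\varphi_{t}(x)=H(t,x)$ I would then derive the pointwise identity
$$\frac{d}{dt}\, r(\varphi_{t}(x)) \;=\; \metric(H,\nabla r) \;=\; \mathrm{tr}\, A_{\nabla r},$$
whose right-hand side is $\F$-basic (an inner product of two basic vector fields), hence independent of $x\in L_{0}$. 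Multiplying by $2r$ gives $\tfrac{d}{dt}r^{2}(t) = 2r\,\mathrm{tr}\,A_{\nabla r}$.

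Next I would shrink $\epsilon$ so that Lemma~\ref{lemma-constant-R-C} applies throughout $\tub_{\epsilon}(L_{q})$ \emph{and} the constant $c_{1}$ it furnishes satisfies $2\epsilon c_{1} < k$, where $k=\dim\F-\dim L_{q}$. Multiplying the two-sided estimate of Lemma~\ref{lemma-constant-R-C} by $2r$ yields
$$-2k - 2r c_{1} \;\leq\; \frac{d}{dt}\, r^{2}(t)\;\leq\; -2k + 2r c_{1},$$
and inside $\tub_{\epsilon}(L_{q})$ the right-hand side is bounded above by $-k<0$. Setting $C_{1}^{2}:=k$ and $C_{2}^{2}:=2k+2\epsilon c_{1}$ and integrating over $[t_{0},t]$ yields item~(a). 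Item~(b) then follows at once: the strict bound $\tfrac{d}{dt}r^{2}\leq -C_{1}^{2}<0$ makes $r$ strictly decreasing, so $L(t)$ stays in $\tub_{\epsilon}(L_{q})$ for all $t\geq t_{0}$, while the resulting inequality $r^{2}(t)\leq r^{2}(t_{0})-C_{1}^{2}(t-t_{0})$ forces $T \leq t_{0} + r^{2}(t_{0})/C_{1}^{2}<\infty$. For item~(c), assuming $r(s)\to 0$ as $s\to T^{-}$, I would apply item~(a) on $[t,s]$ and let $s\to T^{-}$ to obtain $C_{1}^{2}(T-t)\leq r^{2}(t)\leq C_{2}^{2}(T-t)$, which after extracting square roots is exactly the claim.

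The main subtlety, as I see it, is not in the ODE comparison itself but in setting up the evolution of $r$ along the flow: one needs that $\mathrm{tr}\,A_{\nabla r}$ is basic, so that $r(t)$ obeys a scalar ODE rather than a merely pointwise estimate. This rests on the basicness of $H$ (the defining property of generalized isoparametric foliations) together with the basicness of $\nabla r$ (gradient of the basic distance function to the closed singular leaf $L_{q}$). Once this identification is in place, Lemma~\ref{lemma-constant-R-C} reduces everything to a routine integration, so the real content of Lemma~\ref{newlemma_r_t} is carried by those two prior structural ingredients rather than by the elementary estimate that combines them.
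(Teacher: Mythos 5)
Your proposal is correct and follows essentially the same route as the paper: derive $\tfrac{d}{dt}r(\varphi_t(p))=\mathrm{tr}\,A_{\nabla r}$ from the flow equation, plug in the two-sided bound from Lemma~\ref{lemma-constant-R-C}, shrink $\epsilon$ so the sign is controlled, convert to the linear ODE inequality $-C_2^2\leq (r^2)'\leq -C_1^2$, and integrate. The only differences are cosmetic (you phrase the inequality directly in $r^2$ and choose $C_1^2=k$ with $2\epsilon c_1<k$, while the paper takes $C_1^2=2(k-\epsilon c_1)$ with $\epsilon c_1<k$; both work), and you spell out more explicitly the basicness of $H$ and $\nabla r$ that the paper leaves implicit after Lemma~\ref{lemma-constant-R-C}.
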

\begin{proof}

We start with a small $\epsilon_0$  
so that the   distance function $\DR(x)=d(L_{q},x)$ with respect to a singular leaf $L_{q}$ is smooth on 
$\tub_{\epsilon}(L_{q})\setminus L_q .$
 Let $p\in L(t_0)$ and consider the solution of the MCF $\varphi$ with inicial condition $p$ i.e., the curve
$t\to \varphi_t(p)$ such that  $\frac{d}{d t}\varphi_t(p)=H(t)$. Then we have 
\begin{eqnarray*}
r'(t) &=& \frac{d}{d t} \DR\circ \varphi_{t}(p) \\
      &=& \langle \nabla \DR, \varphi'_{t}(p) \rangle \\ 
			&=& \langle \nabla \DR, H(t) \rangle \\
			&=& {\rm tr}(A_{\nabla \DR}).
\end{eqnarray*}

 From Lemma \ref{lemma-constant-R-C}  we  have:
\begin{equation*}
-\frac{k}{\DR}-c_{1}\frac{r}{r} \leq {\rm tr} A_{\nabla \DR} \leq -\frac{k}{\DR}+c_{1}\frac{r}{r}.
\end{equation*}

Now we chose $\epsilon < \min \{\epsilon_{0}, \frac{k}{c_1} \}$
and define the constants $C_1$, $C_2$ by the equations
\[
\left\{\begin{array}{l} \frac{C_1^2}{2}= k-\epsilon \cdot c_{1},\\ \\ 
\frac{C_2^2}{2}=k+ \epsilon \cdot c_{1}\end{array}\right.
\]
The above equations  imply
\begin{equation*}
-\frac{C_2^2}{ 2 \DR(t)} \leq \DR \,'(t) \leq -\frac{C_1^2}{2 \DR(t)}
\end{equation*}
 or, equivalently, $-C_2^2\leq (\DR^2(t))'\leq -C_1^2$. 
Integrating this equation we get
\begin{equation}
\label{eq-lemma_r_t-final}
C_{1}^{2}(t-t_{0})\leq \DR^{2}(t_{0})-\DR^{2}(t)\leq C_{2}^{2}(t-t_{0})
\end{equation} 
for $t>t_{0}$ closer to $t_{0}$ and hence for every $t> t_{0}$. This conclude the proof of item (a).
Itens (b), (c)  follow directly from item (a). 

\end{proof}

Let $\pi: M\to M/\F$ be the canonical projection.
Since $t\to \pi(L(t))$ is contained in a compact set and $T$
is finite, the limit set of $t\to \pi(L(t))$ cannot 
be contained in the regular stratum and thus
it must be contained in the singular one. 
Let $L_{q}$ be a leaf  in the limit set, 
and consider a sequence $t_{n}\subset [0,T)$
so that $t_n\to T$ and $\pi(L(t_{n}))\to \pi(L_{q})$. 
 Given small $\epsilon$, Lemma \ref{newlemma_r_t} implies that  
there exists $t_{n_{0}}$ so that
if $t>t_{n_{0}}$ then $L(t)\in\Tub_{\epsilon}(L_{q}).$
The arbitrariness of $\epsilon$ implies that 
$\pi(L(t))$ converges to $\pi(L_{q})$.

\section{Proof of item (b) of Theorem \ref{theorem-main1} }
\label{section-Proof-ofitem-B-bounded}


\subsection{New  estimate of the shape operator under bounded curvature conditions}

In this section we  generalize    estimates  in 
\cite{AlexandrinoRadeschi-mean-curvature} and these will 
 allow us to prove item (b) of Theorem \ref{theorem-main1}. We are going to use the same convention 
for local frame established in Section \ref{subsection-shapeoperatorestimative}.

\begin{lemma}
\label{lemma-estimative-H-boundedcurvature}
Let $L_q$ be a closed  singular leaf and assume that there exists a tubular neighborhood of $L_q$ with bounded curvature, i.e.,
$-k_1 \leq K\leq k_1$ for a positive constant $k_1.$ 
Then, reducing the tubular neighborhood if necessarily, there exists $t_{0}>0$ so that for $t_{0}<t<T$ we have: 
\begin{equation} 
\label{eq-mean-curvature-supShapeoperatorTau}
\| H(t)\|\leq C_{1}\| A\supind{0}(t)\|\subind{0} +C_2.
\end{equation}
\end{lemma}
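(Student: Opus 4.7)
The plan is to control the $\metric$-mean curvature $H(t)$ in terms of the Sasaki shape operator $A\supind{0}(t)$ by comparing the two metrics $\metric$ and $\metric\supind{0}$ on a suitably small tubular neighborhood. First, by item (a) of Theorem \ref{theorem-main1}, already established in Section \ref{sectionProofitem-a-of-TheoremA}, the projected flow converges in $M/\F$ to the projection of a singular leaf, which after a relabeling we can take to be $L_q$. Hence we may choose $t_{0}<T$ so that $L(t)\subset\tub_{\epsilon}(L_q)$ for every $t\in(t_{0},T)$, where $\epsilon>0$ will be fixed in the course of the argument.

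Next, I would establish that on $U\supind{0}=\exp^{-1}(\tub_{\epsilon}(L_q))$, for $\epsilon$ small enough, the tensor $\omega=\nabla-\nabla\supind{0}$ satisfies a uniform pointwise bound of the form
\begin{equation*}
\|\omega(X)Y\|\leq c_{\omega}\,\|X\|\,\|Y\|.
\end{equation*}
This would rest on two ingredients. The bounded sectional curvature of $(M,\metric)$ allows one to estimate, via Jacobi field comparison along the normal geodesics emanating from $L_q$, how close the pulled-back metric $\metric$ on $U\supind{0}$ is to its Sasaki counterpart $\metric\supind{0}$. The boundedness of the shape operator of $L_q$ as a submanifold of $M$, which is part of the hypotheses of Theorem \ref{theorem-main1}, controls the horizontal behavior of $\metric$ versus $\metric\supind{0}$. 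Together these yield a quasi-isometry $C_{0}^{-1}\metric\leq\metric\supind{0}\leq C_{0}\metric$ on $\tub_{\epsilon}(L_q)$ together with the pointwise bound on $\omega$.

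With these tools in hand, I would compare $H(t)$ to $A\supind{0}(t)$ as follows. Let $\{e_i\}_{i=1}^{\dim\F}$ be a $\metric$-orthonormal frame tangent to $L(t)$, and let $\xi$ be a $\metric$-unit normal. Using the identity $\metric(H(t),\xi)=\mathrm{tr}\,A_{\xi}=-\sum_{i}\metric(\nabla_{e_i}\xi,e_i)$ together with $\nabla_{e_i}\xi=\nabla\supind{0}_{e_i}\xi+\omega(e_i)\xi$, one obtains
\begin{equation*}
\metric\big(H(t),\xi\big)=-\sum_{i}\metric\big(\nabla\supind{0}_{e_i}\xi,e_i\big)-\sum_{i}\metric\big(\omega(e_i)\xi,e_i\big).
\end{equation*}
The first sum is the trace of the Sasaki shape operator $A\supind{0}_{\xi}$ read through the metric $\metric$, and by the quasi-isometry it is dominated in absolute value by $C_{1}\|A\supind{0}(t)\|\subind{0}$, while the second sum is bounded by $\dim\F\cdot c_{\omega}$, an absolute constant depending only on $\tub_{\epsilon}(L_q)$. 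Taking the supremum over $\metric$-unit normals $\xi$ yields the required estimate $\|H(t)\|\leq C_{1}\|A\supind{0}(t)\|\subind{0}+C_{2}$.

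The main obstacle is the pointwise bound on $\omega$ in the second step: since $L_q$ may be non-compact and $\tub_{\epsilon}(L_q)$ only relatively compact transversely, uniform control of $\nabla-\nabla\supind{0}$ is not automatic and crucially relies on both the bounded curvature of $M$ and the bounded shape operator of $L_q$. A secondary subtlety is that the orthogonal decompositions of $TM|_{L(t)}$ induced by $\metric$ and by $\metric\supind{0}$ do not coincide; however, their discrepancy is itself controlled by the quasi-isometry constant and can be absorbed into the final constants $C_{1}$ and $C_{2}$.
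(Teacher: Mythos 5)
Your high-level plan is the right one (compare $\metric$ against the Sasaki metric $\metric\supind{0}$ on a small tubular neighborhood), but two of the steps you gloss over are exactly where the real work lies in the paper, and as written they leave genuine gaps.

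First, the step where you dominate $\sum_{i}\metric\big(\nabla\supind{0}_{e_i}\xi,e_i\big)$ by $C_{1}\|A\supind{0}(t)\|\subind{0}$ is not justified, and the ``secondary subtlety'' you dismiss at the end is actually the heart of the matter. Your $\xi$ is $\metric$-normal to $L(t)$ but generically has a nonzero $\metric\supind{0}$-tangential component $\xi^{0,T}$. Decomposing $\nabla\supind{0}_{e_i}\xi$ accordingly, you pick up the term $\nabla^{0,L(t)}_{e_i}\xi^{0,T}$ (an \emph{intrinsic} covariant derivative on $L(t)$) and a term from the $\metric\supind{0}$-normal connection $\nabla^{0,\perp}_{e_i}\xi^{0,\perp}$, neither of which is a second--fundamental-form quantity; they are first-order in $\xi$ and are not controlled by $\|A\supind{0}(t)\|\subind{0}$, and the quasi-isometry constant (a zeroth-order bound) cannot absorb them. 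The paper avoids this entirely by choosing $\xi=\nabla r\supind{0}$ for the distance to a nearby leaf, expressing $\nabla r\supind{0}=G^{-1}\nabla\supind{0}r\supind{0}$ via the comparison tensor $G$, and invoking Lemma~\ref{lemma1-semilocalstructure} together with the Sasaki structure to reduce the Hessian to the fiber directions (so that only the genuinely second-fundamental-form piece survives); the remaining metric comparison on the fibers is a Rauch argument (Claim~\ref{claim2Marcos} and Lemma~\ref{bounded-Tau}). A ``sup over arbitrary $\metric$-normals $\xi$'' loses this structure.

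Second, your uniform pointwise bound $\|\omega(X)Y\|\leq c_\omega\|X\|\|Y\|$ is asserted, not proved, and is not what the paper establishes. Jacobi-field comparison under bounded curvature and bounded shape operator gives zeroth-order control of $G$ (i.e., of $\metric$ versus $\metric\supind{0}$), but $\omega=\nabla-\nabla\supind{0}$ is first-order in the metrics, and you do not indicate how to control it uniformly along a possibly non-compact $L_q$. The paper sidesteps this too: it isolates the specific combination $\mathcal{B}$ of $\omega$- and $(\nabla\supind{0}G)$-terms that appears in $\mathrm{tr}\,A_{\nabla r\supind{0}}$, observes it is frame-independent and \emph{basic} because $r\supind{0}$ is a basic function on a generalized isoparametric foliation, and deduces global boundedness from boundedness on a relatively compact transversal. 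This basicness mechanism is the crucial device for the non-compact case, and it is missing from your argument.
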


\begin{proof}
Consider the $(1,1)$-tensor field $G$ defined as $\metric(X,Y)=\metric\supind{0}(G X, Y)$. 
For $y$ close to $L(t),$ 
let $r\supind{0}(x)=d\supind{0}(x,L_{y})$ be  the distance function  
with respect to the metric $\metric\supind{0}.$  A direct calculation implies that 
\begin{equation}
\label{eq1-eq-mean-curvature-supShapeoperatorTau}
\nabla r\supind{0}=G^{-1}\nabla\supind{0} r\supind{0}.  
\end{equation}

Using the fact that $r\supind{0}$ is a $\F$-basic function, 
and  $U\supind{0}$ is a saturation of a relative compact neighborhood of $q$, it is straightforward to check the following properties:

\begin{claim}
\label{claim1Marcos}
\

\begin{enumerate}
\item $\nabla r\supind{0}$ is basic; 
\item $\metric (\nabla r\supind{0},\nabla r\supind{0}) $ is constant along regular leaves;
\item $c_{1}< \sqrt{\metric (\nabla r\supind{0},\nabla r\supind{0})} < c_{2}$ on $U_0$, 
where $c_i$ is a constant that does not depend on $\nabla r\supind{0}$.  
\end{enumerate} 
\end{claim}

From Eq. \eqref{eq1-eq-mean-curvature-supShapeoperatorTau}  and  
 $(\nabla\supind{0}_{(\cdot)}G^{-1})=-G^{-1}\big(\nabla_{(\cdot)}\supind{0} G\big)G^{-1}$ 
we have:

\begin{equation}
\label{eq2-eq-mean-curvature-supShapeoperatorTau}
\nabla\supind{0}_{e_{\totalFindice}} \nabla r\supind{0}= G^{-1} \nabla_{e_{\totalFindice}}\supind{0} \nabla\supind{0} r\supind{0} - G^{-1}\big( \nabla_{e_{\totalFindice}}\supind{0}G \big)G^{-1} \nabla\supind{0} r\supind{0}.
\end{equation}
Since $\nabla_{e_{\totalFindice}}\nabla r\supind{0} =\nabla\supind{0}_{e_{\totalFindice}}\nabla r\supind{0}+\omega(e_{\totalFindice})\nabla r\supind{0}$ we have from
Eq.~\eqref{eq2-eq-mean-curvature-supShapeoperatorTau} that:

\begin{eqnarray*}
-  \metric(H(t),\nabla r\supind{0} ) & = & -\mathrm{tr} A_{\nabla r\supind{0}}  =  \sum_{\totalFindice} \metric(\nabla_{e_{\totalFindice}} \nabla r\supind{0}, e_{\totalFindice} )\\
& = &
\sum_{\totalFindice} \metric(\nabla\supind{0}_{e_{\totalFindice}}\nabla r\supind{0}, e_{\totalFindice})+\sum_{\totalFindice} 
\metric(\omega(e_{\totalFindice})\nabla r\supind{0},e_{\totalFindice})\\
&= & \sum_{\totalFindice}\metric\supind{0}( \nabla_{e_{\totalFindice}}\supind{0} \nabla\supind{0} r\supind{0},e_{\totalFindice}) 
- \sum_{\totalFindice}\metric(G^{-1}\big( \nabla_{e_{\totalFindice}}\supind{0}G \big)G^{-1} \nabla\supind{0} r\supind{0},e_{\totalFindice})\\
& + & \sum_{\totalFindice} \metric(\omega(e_{\totalFindice})\nabla r\supind{0},e_{\totalFindice}).
\end{eqnarray*}
In what follows we are going to prove that there exists $c_3$  so that
\begin{equation}
\label{eq3-eq-mean-curvature-supShapeoperatorTau}
|\metric\supind{0}( \nabla_{e_{\totalFindice}}\supind{0} \nabla\supind{0} r\supind{0},e_{\totalFindice})|< c_{3} \| A\supind{0}(t)\|\subind{0}
\end{equation}
Set $\mathcal{B}:=-\sum_{\totalFindice}\metric(G^{-1}\big( \nabla_{e_{\totalFindice}}\supind{0}G \big)G^{-1} \nabla\supind{0} 
r\supind{0},e_{\totalFindice})
 +  \sum_{\totalFindice} \metric(\omega(e_{\totalFindice})\nabla r\supind{0},e_{\totalFindice})$.
Note that $\mathcal{B}$ is well defined along the regular leaves (its definition does not depend on the frame $\{e_{\totalFindice}\}$).  
The fact that the mean curvature and $\| A\supind{0}(t)\|\subind{0}$  are basic and 
Eq.\eqref{eq3-eq-mean-curvature-supShapeoperatorTau}
will imply that $\mathcal{B}$ is bounded along each regular leaf and hence 
(since is bounded on relative compact  a neighborhood of $q$) bounded on the regular stratum of $U\supind{0}$ 
i.e., $|\mathcal{B}|<c_{4}.$ These equations will then imply that
\begin{equation}
\label{eqintermediariaeq-mean-curvature-supShapeoperatorTau} 
 |\metric(H(t),\nabla r\supind{0} )|\leq c_{3} \| A\supind{0}(t)\|\subind{0}+  c_{4}.
\end{equation} 
The arbitrariness of $r\supind{0}$,
Eq. \eqref{eqintermediariaeq-mean-curvature-supShapeoperatorTau} and 
item (c) of Claim \ref{claim1Marcos}  allow us  to  infer  Eq.\eqref{eq-mean-curvature-supShapeoperatorTau}.  
Let us now prove Eq. \eqref{eq3-eq-mean-curvature-supShapeoperatorTau}.  

As in the previous lemma, we  denote $X^{\nu}$ 
the $\metric\supind{0}$-projection of a vector $X$ onto 
the fibers of $\nu(L_{q})$. 
By using Lemma \ref{lemma1-semilocalstructure}, we can check:
\begin{equation}
\label{eq5-eq-mean-curvature-supShapeoperatorTau}
\metric\supind{0}(\nabla\supind{0}_{e_{\normalindice}}\nabla\supind{0}r\supind{0},e_{\normalindice})=
\metric\supind{0}(\nabla\supind{0}_{e_{\normalindice}^{\nu}}\nabla\supind{0}r\supind{0},e_{\normalindice}^{\nu}).
\end{equation}

 Writing  
$X^{\nu}=\sum_{\beta} \metric\supind{0}(X,e\supind{0}_{\beta}) e\supind{0}_{\beta}$,
where $\{ e\supind{0}_{\beta} \}$
is a $\metric\supind{0}$-orthonormal  basis
of principal directions of 
$A\supind{0}_{\nabla\supind{0}r\supind{0}}$, it is easy to verify  the next equation:
\begin{equation}
\label{eq4-eq-mean-curvature-supShapeoperatorTau}
|\metric\supind{0}(\nabla\supind{0}_{X^{\nu}}\nabla\supind{0}r\supind{0},X^{\nu})|\leq  \|A^{\scriptscriptstyle 0}_{\nabla^{ \scriptscriptstyle 0} r^{\scriptscriptstyle  0}}\|_{ \scriptscriptstyle 0} \metric^{\scriptscriptstyle  0}(X^{\nu},X^{\nu}).
\end{equation}


\begin{claim}
\label{claim2Marcos}
There exist  constants $c_5$, $c_6$ (that depends only on radius $\epsilon$ and 
the bounded curvature) so that $c_{6} \, \metric(X^{\nu},X^{\nu}) \leq\metric\supind{0}(X^{\nu},X^{\nu})\leq c_{5} \, \metric(X^{\nu},X^{\nu})$, for every 
$X^{\nu}\in T_{x}\big(\nu_{\rho(x)}(L_q)\big)$.   
\end{claim}

In fact, consider $W\in \nu_{\rho(x)}(L_q)$ so that $\metric(rW,rW)=\metric\supind{0}(rW,rW)_{\rho(x)}=1$ and 
$\frac{X^{\nu}}{\| X^{\nu} \|}=\frac{J(r)}{\|J(r)\|}$ where  
 $J(s)= d (\exp_{\rho(x)})_{sv}(sW)$ is the associated Jacobi field.  
Since  $\exp\supind{0}_{\rho(x)}=\exp_{\rho(x)},$ we have that $J(s)=J\supind{0}(s)$ and hence 
$$\metric\supind{0}\Big(\frac{X^{\nu}}{\| X^{\nu}\|},\frac{X^{\nu}}{\| X^{\nu}\|}\Big)=\metric\supind{0}\Big(\frac{J(r)}{\| J(r) \|},\frac{J(r)}{\| J(r) \|}\Big)= 
\metric\supind{0}\Big(\frac{J\supind{0}(r)}{\| J(r)\|},\frac{J\supind{0}(r)}{\| J(r) \|}\Big) =\frac{1}{\| J(r)\|^{2}}.$$
Claim \ref{claim2Marcos} follows from  Rauch's theorem \cite[Chapter 10]{Manfredo}, that assures $\frac{1}{\sqrt{c_{5}}}\leq \|J(r)\|\leq \frac{1}{\sqrt{c_6}}$. 

\

From Lemmas \ref{lemma-parallelandslices} and \ref{bounded-Tau} and  Claim \ref{claim2Marcos}  we know that if $\metric(e_{\normalindice},e_{\normalindice})=1$ then 
$\metric^{\scriptscriptstyle  0}(e^{\nu}_{\normalindice},e^{\nu}_{\normalindice})$ is bounded. This fact, 
Eq. \eqref{eq5-eq-mean-curvature-supShapeoperatorTau}, \eqref{eq4-eq-mean-curvature-supShapeoperatorTau}
and Claim \ref{claim2Marcos} imply Eq. \eqref{eq3-eq-mean-curvature-supShapeoperatorTau},
which concludes the proof, as discussed before. We stress that the condition of bounded shape operators
along the leaves has been used in Lemma \ref{bounded-Tau}.
\end{proof}


\begin{remark}
The proof of Lemma \ref{lemma-estimative-H-boundedcurvature} also  implies that, for each $\widetilde{q}$ in the
 singular leaf $L_q$, there exists a (relative compact)  neighborhood $V$ of the point $\widetilde{q}$ 
and constants $C_1$ and $C_3$ so that
\begin{equation}
\label{eqremark-estimative-A-boundedcurvature}
\| A_{x}(t)\|\leq C_{1}\| A_{x}\supind{0}(t)\|\subind{0} +C_3.
\end{equation} 
for $x\in V$. Here, it is important to stress that the constant $C_3$ may depend on the neighborhood $V$ of $\widetilde{q}$ and 
may not be defined on tubular neighboorhood of $L_q$ in the case where $L_{q}$ is not compact. 
\end{remark}

\subsection{Revised proof}
\label{revisedproof-typeI}

Once we have proved Lemma  \ref{lemma-estimative-H-boundedcurvature}
and Eq.~\eqref{eqremark-estimative-A-boundedcurvature}, 
the proof of Item (b) of Theorem \ref{theorem-main1}
follows from a small adaptation of \cite{AlexandrinoRadeschi-mean-curvature}
as we now review. 


Given  a tubular neighborhood $\tub_{\epsilon}(L_{q}),$  we 
define $r_{\Sigma}: \tub_{\epsilon}(L_{q})\to \mathbb{R}$ 
as the distance between $L_x$ and the singular strata $\Sigma$, 
and $f:\tub_{\epsilon}(L_{q})\to \mathbb{R}$ as 
the distance between $L_x$ and its focal set. By abuse
of notation, we set $r_{\Sigma}(t):=r_{\Sigma}(L(t))$
and $f(t)=f(L(t))$. As proved in  \cite[Proposition 3.6]{AlexandrinoRadeschi-mean-curvature}  
$r_{\Sigma}(t)\geq C r(t)$ for 
$r(t)=d(L(t),L_{q})$. From 
Lemma \ref{newlemma_r_t} we can infer
that 
$r_{\Sigma}(t)\geq C \sqrt{T-t}.$  
As proved in \cite[Proposition 3.7]{AlexandrinoRadeschi-mean-curvature},  
there exists a constant $\sigma\in (0,1)$
such that $f(p)\geq \sigma r_{\Sigma}(p)$
for every regular point $p\in M$. 
These results hold on a tubular  neighborhood of $L_q$ 
for the original metric $\metric$ and metric  $\metric\supind{0}.$ Putting these results together 
we get
\begin{equation}
\label{eq-1-proofItemb}
f\supind{0}(t)\geq C\sqrt{T-t}, 
\end{equation}
where $f\supind{0}(t)$ is the distance between $L_x$ and its focal set with respect with
$\metric\supind{0}.$ 
On the other hand, by Lemma \ref{lemma1-semilocalstructure} we infer 
\begin{equation}
\label{eq-2-proofItemb}
\| A\supind{0}(t)\|\subind{0}=\frac{1}{f\supind{0}(t)}.
\end{equation}
Combining Eq.  \eqref{eq-1-proofItemb} and \eqref{eq-2-proofItemb} we have that 
\begin{equation}
\label{eq-3-proofItemb}
\| A\supind{0}(t)\|\subind{0}\sqrt{T-t}\leq C_3
\end{equation}
holds on $\tub_{\epsilon}(L_{q})$. Eq. \eqref{eq-3-proofItemb} and  Lemma  \ref{lemma-estimative-H-boundedcurvature} imply 
\begin{equation}
\label{eq-4-proofItemb}
\|H(t)\|\sqrt{T-t} \leq  C_{4}.  
\end{equation}

Let $\gamma(t):=\varphi_{t}(p)$ be the integral curve of $H$ starting at $p.$
Define $h:[0,1)\to[0,T)$ as $h(s):=T-T(1-s)^{2}$ and set $\beta(s):=\gamma(h(s)).$   
As consequence of Eq. \eqref{eq-4-proofItemb} we have 
 $\|\beta'(s)\|< \infty.$  
Since $L(t)$ converges to $L_q$ (in the leaf space sence) and $\beta$ has finite length,  
we conclude that $\beta$ converges to a point $q$, i.e., $\gamma$ converges to a point $q.$  
This fact, Eq. \eqref{eq-3-proofItemb} and  \eqref{eqremark-estimative-A-boundedcurvature}
imply that 
\begin{equation}
\label{eq-5-proofItemb}
\| A_{x}(t)\|\sqrt{T-t}\leq C_{5}, 
\end{equation}
for $x$ close to $q.$  Equation \eqref{eq-5-proofItemb} implies that the convergence is type I. 

\section{Remarks on MCF of non-closed regular leaf.  }
\label{sectionMCFNoclosed}

As proved in  \cite{Alexandrino-Radeschi-Molino},  if   $\F=\{ L \}$ is a SRF then  $\overline{\F}=\{ \overline{L}, L\in \F\}$ (i.e, 
partition of $M$ into the closures of the leaves of $\F$) is also a 
SRF. This was the so called \emph{Molino's conjecture}.

Note that mean curvature of $\overline{L_{q}}$  does not necessary coincide with the mean curvature of $L_{q}$ 
and hence  it would make sense to ask if we can say something 
about the MCF of a regular (non-closed) leaf as initial datum.

As we are going to explain,  
a part from a  small generalization 
in the semi-local model presented in Section \ref{section-sasakimetric}
(see  \cite{Alexandrino-Radeschi-Molino} and \cite{Alexandrino-Inagaki-Struchiner}),  
the proofs of  Lemmas \ref{Lemma-bounds_S}
and   \ref{newlemma_r_t} also hold for SRF with non-closed leaves. In this case 
the singular leaf $L_{q}$ can be replaced by its closure $\overline{L_{q}}$.

Let $B$ be a closed submanifold of $M$ saturated by  leaves of $\F$ with the same dimension  
(e.g, $B=\overline{L}$ or $B$ is the minimal stratum). 
Consider $U:=\tub_{\epsilon}(B)$ the tubular neighborhood 
and $\rho:U\to B$ the metric projection. Again, via normal exponential map, we can
identify $U$ with a neighborhood  of the null section $B,$
 $\F|_{U}$ with a foliation on the normal bundle $\nu(B)$ of $B,$
and the map $\rho$ with the foot point map $\nu(B)\to B.$  

There exists 3 homothetic distributions $(\mathcal{K},\mathcal{T},\mathcal{N})$ so that: 
\begin{itemize} 
\item $\mathcal{K}=\ker\rho_{*}$,  
\item $\mathcal{T}$ extends $T\F|_{B}$ and is everywhere
tangent to the leaves of $\F,$
\item $\mathcal{N}$ extends the normal space of  $T\F|_{B},$
\item $TM=\mathcal{T}\oplus\mathcal{N}\oplus\mathcal{K}.$
\end{itemize}
Let $\metric\supind{0}$ be the ``Sasaki" metric on the fiber bundle $\mathcal{K}\to B$ with respect to the
homothetic  distribution $\mathcal{T}\oplus\mathcal{N}.$ Note that in this case the fibers  $\nu(B)$ are not 
totally geodesic, because the product is 
compatible with the metric only if we derive in the direction of $\mathcal{T}$. 
The same proof of Lemma \ref{lemma1-semilocalstructure}   
allow us to conclude that 
\begin{equation}
\label{eq-1-sectionMCFNoclosed}
\metric\supind{0}(\nabla\supind{0}\xi|_{\mathcal{T}},X)=0  \mathrm{ \, \, \,  \forall \, \, }  X\in T_{x}L_{x},
\end{equation}
where $\xi=\nabla\supind{0} r,$ for $r(x)=d\supind{0}(\overline{L_{q}},x).$
Equation \eqref{eq-1-sectionMCFNoclosed}  allow us to infer the analogous to Lemma \ref{Lemma-bounds_S}
and  to Lemma \ref{newlemma_r_t}, 
if we replace the singular leaf $L_{q}$ with its closure $\overline{L_{q}}$.

In the particular case where $M$ is compact, we can use this adaptation  of Lemma \ref{newlemma_r_t} 
 to infer the convergence of MCF.

\begin{proposition}
\label{proposition-SRFnoclosed}
Let $M$ be a compact Riemannian manifold and $\F$ be a generalized isoparametric foliation on $M$, with possible non-closed leaves. 
Assume that the MCF $t\to L(t)$  of a regular leaf $L(0)$ as initial datum has $T<\infty$. Then $t\to L(t)$ must converge to 
the closure of a singular leaf. In addition, if for each $x\in M$ all leaves of the  
infinitesimal foliation $\F_x$ 
are compact (i.e., if $\F$ is 
infinitesimal compact) then 
for each $p\in L(0)$ the line integral of MCF $\varphi_{t}(p)$ 
converges  to a point of $L_{T}$ and  singularity  is 
of type I. 
    \end{proposition}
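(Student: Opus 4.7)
The plan is to run the strategy of Sections \ref{sectionProofitem-a-of-TheoremA} and \ref{section-Proof-ofitem-B-bounded} with the closure $\overline{L_q}$ of a singular leaf playing the role previously played by $L_q$, relying on the adaptation described in Section \ref{sectionMCFNoclosed}. The key point is that the Sasaki-type semi-local model built from the triple decomposition $\mathcal{T}\oplus\mathcal{N}\oplus\mathcal{K}$ still yields Equation \eqref{eq-1-sectionMCFNoclosed}, which is precisely the substitute for item (d) of Lemma \ref{lemma1-semilocalstructure} needed to re-run the shape-operator estimate of Lemma \ref{lemma-constant-R-C} and hence the basin of attraction Lemma \ref{newlemma_r_t} with $\overline{L_q}$ in place of $L_q$.

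For the first assertion, compactness of $M$ makes the closure leaf space $M/\overline{\F}$ compact, so the curve $t\mapsto \pi(L(t))$ has nonempty limit set as $t\to T^-$. Because $T<\infty$, this limit set cannot be contained in the regular stratum (otherwise the flow would extend past $T$), so it contains $\pi(\overline{L_q})$ for some singular leaf $L_q$. Applying the non-closed version of Lemma \ref{newlemma_r_t} to $\overline{L_q}$ produces, for every small $\epsilon$, a basin $\tub_\epsilon(\overline{L_q})$: as soon as some $L(t_n)\subset \tub_\epsilon(\overline{L_q})$ with $t_n$ close to $T$, the flow stays in $\tub_\epsilon(\overline{L_q})$ for all $t\in[t_n,T)$. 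Letting $\epsilon\to 0$ then forces $\pi(L(t))\to \pi(\overline{L_q})$, which is convergence in the leaf-space sense.

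For the second assertion, assume infinitesimal compactness. Compactness of $M$ automatically supplies bounded ambient curvature and bounded shape operators along leaves, so the hypotheses of item (b) of Theorem \ref{theorem-main1} are in force locally near $\overline{L_q}$. Infinitesimal compactness is exactly what ensures that the focal-distance function $f\supind{0}(t)$ with respect to $\metric\supind{0}$ is well-defined on $\tub_\epsilon(\overline{L_q})$ and satisfies the analogue of \eqref{eq-2-proofItemb}, namely $\|A\supind{0}(t)\|\subind{0}=1/f\supind{0}(t)$, because the infinitesimal foliations $\F_x$ are then closed generalized isoparametric foliations of flat Euclidean fibers. Combining the inequalities $r_\Sigma(t)\geq C\sqrt{T-t}$ and $f\supind{0}(t)\geq \sigma\, r_\Sigma(t)$ (valid here by the same arguments as in the closed case) with the non-closed analogue of Lemma \ref{lemma-estimative-H-boundedcurvature} built from \eqref{eq-1-sectionMCFNoclosed} yields
\[
\|A\supind{0}(t)\|\subind{0}\sqrt{T-t}\leq C_3,\qquad \|H(t)\|\sqrt{T-t}\leq C_4.
\]
Reparametrizing the integral curve $\gamma(t)=\varphi_t(p)$ by $\beta(s)=\gamma(T-T(1-s)^2)$ makes $\|\beta'(s)\|$ bounded, so $\beta$ has finite length and converges to a point $q\in L_T$. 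Finally, the transfer estimate \eqref{eqremark-estimative-A-boundedcurvature} upgrades the $\metric\supind{0}$-bound on $\|A\supind{0}\|\subind{0}$ to the type I estimate $\|A_x(t)\|\sqrt{T-t}\leq C_5$ for $x$ near $q$.

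The main obstacle will be verifying that the semi-local estimates of Sections \ref{sectionProofitem-a-of-TheoremA} and \ref{section-Proof-ofitem-B-bounded} genuinely carry over in spite of the fact (noted in Section \ref{sectionMCFNoclosed}) that the fibers of $\nu(\overline{L_q})$ are no longer totally geodesic: the proof of Lemma \ref{lemma-constant-R-C} relied crucially on the vanishing $\nabla\supind{0}\xi|_\mathcal{T}=0$, and one must check that \eqref{eq-1-sectionMCFNoclosed} is strong enough to prevent spurious terms from entering the trace computation of $A_{\nabla r}$. The second delicate point is confirming that infinitesimal compactness is precisely the hypothesis needed to give the focal-distance formula $\|A\supind{0}(t)\|\subind{0}=1/f\supind{0}(t)$ in this generalized setting; once these two technical pieces are in place, the proof assembles routinely as above.
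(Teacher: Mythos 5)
Your treatment of the first assertion (convergence to the closure of a singular leaf) follows essentially the same route as the paper: use the non-closed adaptation of the basins-of-attraction Lemma \ref{newlemma_r_t}, combine with compactness, and conclude convergence in the leaf-space sense. The only difference is cosmetic: the paper covers the compact singular stratum $\Sigma$ by finitely many basins $\tub_\epsilon(\overline{L_{q_i}})$ and notes boundedness of $H$ on $M\setminus\cup\tub_\epsilon(\overline{L_{q_i}})$, while you argue directly that the limit set cannot lie in the regular stratum when $T<\infty$; both are fine.

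For the second assertion (type I singularity) you take a genuinely different route that leaves a real gap. You propose to run the global $\metric\supind{0}$ (Sasaki-type) machinery of Section~\ref{section-Proof-ofitem-B-bounded} on $\tub_\epsilon(\overline{L_q})$, in particular the equality $\|A\supind{0}(t)\|\subind{0}=1/f\supind{0}(t)$ of \eqref{eq-2-proofItemb}. But that equality was derived from items (a)--(c) of Lemma~\ref{lemma1-semilocalstructure}, which in turn rely on the fibers of $\nu(L_q)$ being totally geodesic for $\metric\supind{0}$. As the paper explicitly warns in Section~\ref{sectionMCFNoclosed}, with the decomposition $\mathcal{T}\oplus\mathcal{N}\oplus\mathcal{K}$ the fibers $\nu(B)$ of the non-closed model are \emph{not} totally geodesic; the only surviving fact is the weaker Eq.~\eqref{eq-1-sectionMCFNoclosed}, which suffices for the trace estimate of Lemma~\ref{Lemma-bounds_S} but not for items (b)--(c) of Lemma~\ref{lemma1-semilocalstructure} and hence not for \eqref{eq-2-proofItemb}. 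You flag this yourself as ``the main obstacle'' but do not resolve it, and you do not justify the claim that infinitesimal compactness alone restores the focal-distance formula. The paper avoids this entirely by covering the compact set $B=\overline{L_q}$ by finitely many \emph{plaque} tubular neighborhoods $\{\mathcal{O}_n\}$ as in \cite{AlexandrinoRadeschi-mean-curvature}, where the comparison is with the local flat shape operator $A\supind{e}$; infinitesimal compactness is invoked precisely to make that local flat-plaque analysis of \cite{AlexandrinoRadeschi-mean-curvature} applicable, giving $\|A(t)\|\leq\|A\supind{e}(t)\|+\widehat C$ and $\|A\supind{e}(t)\|\sqrt{T-t}<\widetilde C$ on each piece, and then taking a maximum over the finitely many pieces. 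To make your version work you would have to supply the missing argument that the non-closed Sasaki model still satisfies the analogue of Lemma~\ref{lemma1-semilocalstructure}(b)--(c), which is not obvious and is not what the authors do.
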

\begin{proof}

Since $M$ is a compact Riemannian manifold, we know that the singular strata $\Sigma$ (i.e., the union  of singular leaves) is also compact,
because  it is closed in $M$. Therefore, one can cover $\Sigma$ with a finite union of small tubular neighborhoods  
$\{\Tub_{\epsilon}(\overline{L_{q_i}})\}$ (the basins of attraction). This property, the fact that the mean curvature is bounded on the precompact set $M\setminus\cup \Tub_{\epsilon}(\overline{L_{q_i}})$ and the the arbitrariness choice of $\epsilon$ imply 
that limit set of $t\to \pi(L(t))$ must be contained in $\Sigma$ when $T<\infty$.
Therefore we can also follow the same argument
of Section \ref{subsectionproof-item (a)} and conclude the convergence in the leaf sence, i.e, 
 $\pi(L(t))$ converges to a singular  point of $M/\overline{\F}$, where $\pi:M\to M/\overline{\F}$ is the canonical projection.    

Now we assume that $\F$ is 
infinitesimal compact. 
In order to check the type I convergence, 
let us consider a finite open cover  
$\{ \mathcal{O}_{n}  \}$ of
the compact manifold $B=\overline{L_{q}}$. 
Here $\{ \mathcal{O}_{n} \}$ denotes the  tubular neighborhood
of a plaque $P_n\subset B$ as defined in \cite{AlexandrinoRadeschi-mean-curvature}. 
Note that the discussion introduced in \cite{AlexandrinoRadeschi-mean-curvature}  still holds in the neighborhood $\mathcal{O}_n$
because  $\F$ is infinitesimal compact.

 Since $L(t)$ converges in a leaf sense
to $B$, we can find $t_0$ so that the  integral line $\alpha(t)=\varphi_{t}(p)$ is contained in $\cup_{n} \, \mathcal{O}_{n}$ for $t>t_0.$ 
For  each neighborhood $\mathcal{O}_n$ we know   that there exists an open set  $I_n\subset (t_0,T)$ so that  
$\| A(t)\|\leq \| A\supind{e}(t) \|+\widehat{C}$ for $t\in I_{n}$. 
Here $A\supind{e}(t)$ is the shape operator with respect to the flat metric on $\mathcal{O}_n$, 
see  \cite{AlexandrinoRadeschi-mean-curvature} for details. Also as 
explained in \cite{AlexandrinoRadeschi-mean-curvature},  $\|A\supind{e}(t)\|\sqrt{T-t}<\widetilde{C}$, for $t\in I_{n}$. 
Therefore we have that
$\| A(t)\|\sqrt{T-t}< C_n$ for $t\in I_{n}$. Set $C:=\max_{n} \{ C_n \}.$ Hence    
$\|A(t)\|\sqrt{T-t}<C$ for all $t\in (t_{0},T)$. From this, as explained at the end of Section \ref{revisedproof-typeI}, we infer the type I convergence and that the  integral line
$\alpha$ converges to a point.

\end{proof}

\begin{remark}
 It is possible  to check that there exists a  metric $\metric\supind{0}$
so that $\F|_{U}$ is a generalized isoparametric foliation, when $\F|_{B}$ is a generalized isoparametric foliation (with the respect to 
the original metric).
\end{remark}


\section{Cylinder structure, bounded geometry and MCF }
\label{section-onvergenceMCFviacomparisonlemma}
 
In this section we present a generalization of item (a) of Theorem \ref{theorem-main1}. 

\begin{proposition}
\label{theorem-main3}
Let $\mathcal F$ be a SRF with closed leaves on a complete manifold $(M,g)$. 
Assume that
\begin{enumerate}
    \item   $M$ has bounded sectional  curvature; 
    \item  the shape operator along  each leaf $L \in \mathcal F$ is bounded; 
		\item $M/\F$ is compact. 
\end{enumerate}
Let $N$ be an imersed  submanifold contained in a regular leaf.
Assume that the dimension of $N$ is greater than the dimension of singular leaves, that 
the mean curvature flow  $N(t)$ is a restriction of a basic flow (with respect to 
$\F$) on the regular stratum and  its maximal interval $[0,T)$ has $T <\infty.$ Then  
 $N(t)$ converges to a singular leaf $L$ in the leaf space sense.  
\end{proposition}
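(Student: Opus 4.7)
The plan is to port the basin-of-attraction strategy of Section \ref{sectionProofitem-a-of-TheoremA} from full leaves to the immersed submanifold $N(t)$, and then to combine it with the compactness of $M/\F$ to locate the limit leaf, exactly as done in Section \ref{subsectionproof-item (a)} and in the proof of Proposition \ref{proposition-SRFnoclosed}. The overall architecture mirrors the proof of item (a) of Theorem \ref{theorem-main1}; the one new input is Lemma \ref{prop:submanifold}, which plays here the role of Lemma \ref{lemma-constant-R-C}, since $\mathrm{tr}(A_{\nabla r}|_{TN})$ is no longer $\F$-basic for an immersed $N$.

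First I would fix a singular leaf $L_q$ and set $r(x)=d(L_q,x)$. Because leaves of an SRF are equidistant, $\nabla r$ is orthogonal to every regular leaf $L_x$ and hence to $TN(t) \subset TL_x$, so along an integral line $\varphi_t(p)$ of the MCF of $N$ one has
\[
\frac{d}{dt} r(\varphi_t(p)) = \langle \nabla r, H_N \rangle = \mathrm{tr}(A_{\nabla r}|_{T_p N(t)}).
\]
I would then use the Sasaki splitting $TL_x = \mathcal{T} \oplus (\nu_{\rho(x)}(L_q) \cap TL_x)$ and item (d) of Lemma \ref{lemma1-semilocalstructure} to show that the $\mathcal{T}$-directions contribute only a bounded error through $\omega = \nabla - \nabla\supind{0}$, so that the principal part of the trace equals $-\frac{1}{r}\,\mathrm{tr}(P_\nu|_{T_p N(t)})$, where $P_\nu$ is orthogonal projection onto the slice. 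The hypothesis $\dim N > \dim L_q = \dim \mathcal{T}$, together with $P_\nu = I - P_\mathcal{T}$ on $TL_x$, immediately yields
\[
\dim N - \dim L_q \leq \mathrm{tr}(P_\nu|_{TN}) \leq \dim N,
\]
with the lower bound strictly positive. Lemma \ref{prop:submanifold}, whose hypotheses of bounded curvature of $M$ and bounded shape operator along leaves are precisely items (1) and (2) of the proposition, makes the $O(1)$ correction uniform on a small tube $\Tub_\epsilon(L_q)$, giving the pinching
\[
-\frac{C_2^2}{2\,r(t)} \leq r'(t) \leq -\frac{C_1^2}{2\,r(t)}
\]
with $0 < C_1 < C_2$. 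Integrating as in Lemma \ref{newlemma_r_t} shows that $\Tub_\epsilon(L_q)$ is a basin for the MCF of $N$: once $N(t_0) \subset \Tub_\epsilon(L_q)$ the flow remains inside, $r(t)\to 0$ uniformly in $p$, and $\pi(N(t)) \to \pi(L_q)$ in finite time.

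To identify the actual limit leaf, I would argue as in Section \ref{subsectionproof-item (a)} and Proposition \ref{proposition-SRFnoclosed}. The basic vector field $X$ extending $H_N$ on the regular stratum descends to $M/\F$, which is compact, so $X$ is uniformly bounded on every $\F$-saturated relatively compact subset of the regular stratum. If $\pi(N(t))$ stayed at positive distance from the singular part of $M/\F$ up to time $T$, then $\|H_N(t)\|_\infty$ would remain bounded and the MCF would extend past $T$, contradicting $T<\infty$. Hence the limit set of $\pi(N(t))$ meets $\pi(L_q)$ for some singular $L_q$; the basin of attraction built above then traps $\pi(N(t))$ for $t$ close enough to $T$, and the arbitrariness of $\epsilon$ forces $\pi(N(t)) \to \pi(L_q)$ as $t \to T^-$.

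The hardest step is the uniform $O(1)$ control of the correction term in the trace identity, i.e.\ the content of Lemma \ref{prop:submanifold}. In Lemma \ref{lemma-constant-R-C} this was essentially free because $\mathrm{tr}(A_{\nabla r})$ is $\F$-basic along a regular leaf, so a pointwise bound propagates to the whole saturation. For an immersed $N$ that shortcut disappears, and one has to bound both $\omega = \nabla - \nabla\supind{0}$ and the change-of-metric tensor $G$ uniformly on $\Tub_\epsilon(L_q)$ using only bounded sectional curvature of $M$ and bounded shape operators along the leaves, essentially via Jacobi-field comparison on the slices in the style of Claim \ref{claim2Marcos}.
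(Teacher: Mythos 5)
Your proposal takes essentially the same route as the paper: invoke Lemma \ref{prop:submanifold} as the replacement for Lemma \ref{lemma-constant-R-C}, derive the pinching $-C_2^2/(2r)\leq r'\leq -C_1^2/(2r)$ along the basic flow, integrate to obtain a basin of attraction $\Tub_\epsilon(L_q)$, and use the compactness of $M/\F$ together with the hypothesis that the MCF is a restriction of a basic flow to rule out a regular limit and then, by shrinking $\epsilon$, to identify the limit leaf. Your dimension count $\dim N-\dim L_q\leq \mathrm{tr}(P_\nu|_{TN})$ is the geometric reason the hypothesis that $\dim N$ exceeds the dimension of the singular leaves is needed; in the paper's proof of Lemma \ref{prop:submanifold} this corresponds to the step where $\sum_\totalNindice\|e_\totalNindice^{\perp}\|^2\geq C>0$ is obtained from Lemma \ref{lemma-parallelandslices}.

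The one place you diverge is in how you envisage establishing Lemma \ref{prop:submanifold} itself. You propose to carry over the Sasaki-metric computation of Lemma \ref{lemma-constant-R-C}, treating $\omega=\nabla-\nabla\supind{0}$ and the change-of-metric tensor $G$ as error terms, and you correctly note that the $\F$-basicity trick that makes $\mathrm{tr}\,\omega(\cdot)\xi$ automatically bounded in Lemma \ref{lemma-constant-R-C} is unavailable for an immersed $N$, so one would need uniform bounds on $\omega$ and $G$ from bounded geometry. The paper's proof of Lemma \ref{prop:submanifold} avoids this entirely: it compares $\mathrm{Hess}\,r$ against a warped product $(\mathbb{S}_R^d\times L,\metric_{\mathbb{S}_R^d}\times e^{2\phi}\metric_L)$ via the index-type Lemma \ref{lem:comparison}, and the constants $R$, $\lambda$, $\epsilon_0$ depend only on the curvature bound $\Lambda$ and the shape-operator bound, making uniformity on the tube automatic. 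Both routes lead to the same pinching; the warped-product comparison is cleaner because it never needs to control $\omega$ or $G$ and uses only Lemma \ref{lemma-parallelandslices} to ensure positivity of the slice component, whereas your Sasaki-based route would essentially require the full machinery of Lemma \ref{bounded-Tau}, which the paper reserves for item (b) of Theorem \ref{theorem-main1}.
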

   
Again the main idea is to deduce the existence of basins of attraction, i.e, Lemma \ref{newlemma_r_t}. 
The key observation behind the proof of this adaption of Lemma \ref{newlemma_r_t}  is the estimate in 
Lemma \ref{prop:submanifold}, which we believe can be  useful in the context of immersion theory.

\subsection{Comparisons lemmas}

Here  we  
consider two triples $(M_{1},L_{1},\gamma_{1})$ and $(M_{2},L_{2}, \gamma_{2})$, where  
$(M_{i}, \metric_{i})$ is a Riemannian manifold so that $\dim M_1=\dim M_2$,
$L_{i}$ is an embedded submanifold of $M_{i}$ so that $\dim L_1=\dim L_2$ and $\gamma_{i}$ 
is a  unit speed geodesic  orthogonal   to $L_{i}$ at $\gamma_{i}(0)$. We also assume that
$U_{i}$ is a tubular  neighborhood of $L_{i}$ of radius $\epsilon_{0}$ 
so that the distance function  $r_{i}: U_{i}\setminus L_{i}\to \mathbb{R}$ defined as $r_{i}(x)=d_{i}(L_{i},x)$ 
is smooth. In particular we are assuming that  
$\gamma_{i}|_{[0,\epsilon_{0}]}$ does not contain a focal point of $L_{i}$.

It is not difficult to adapt classical arguments of index lemma to conclude the next result,
cf. \cite[Chapter 2, Theorem A]{greenWu}. 

\begin{lemma}\label{lem:comparison}
Assume that:
\begin{enumerate}
\item[(a)] $\sup_{ e_{1}(t)\in  \nu_{1}(s)} K_{1} (e_{1},\gamma'_{1})\leq \inf_{ e_{2}(s)\in  \nu_{2}(t)} K_{2}(e_{2},\gamma_{2}')$
(where $\|e_i\|=1$).  
\item[(b)] $\max_{j} \lambda_{1,j}\leq \min_{j} \lambda_{2,j}$ where $\lambda_{i,j}$ is 
a principal curvature to the shape operator $A_{\gamma_{i}'(0)}.$  
\end{enumerate}
Then there exists an isomorphism $\theta(\epsilon):\nu_{1}(\epsilon)\to \nu_{2}(\epsilon)$ (for $\epsilon<\epsilon_{0}$) so that  
$$ \mathrm{Hess}\, r_{2}(\theta(\epsilon)X_{1},\theta(r)X_{1})\leq  \mathrm{Hess} \, r_{1}(X_{1},X_{1}).$$
Here $\nu_{i}(s)$ denotes the space of normal vectors to $\gamma_{i}'(s).$
\end{lemma}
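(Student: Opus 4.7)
The plan is to reduce the desired Hessian inequality to a comparison of index forms between $M_1$ and $M_2$, in the spirit of the Rauch-type arguments for distance functions to a point, but keeping an extra boundary term that encodes the shape operator of $L_i$ at $\gamma_i(0)$. Hypothesis (a) will control the integral part of the index form, whereas hypothesis (b) will control the boundary term; together they force $\mathrm{Hess}\, r_2 \leq \mathrm{Hess}\, r_1$ after a suitable parallel-transport identification.

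First I would construct $\theta(\epsilon)$ by fixing an arbitrary linear isometry
\(\theta_0 \colon T_{\gamma_1(0)} M_1 \to T_{\gamma_2(0)} M_2\)
sending $\gamma_1'(0) \mapsto \gamma_2'(0)$ and mapping $T_{\gamma_1(0)} L_1$ onto $T_{\gamma_2(0)} L_2$ (possible because $\dim M_1 = \dim M_2$ and $\dim L_1 = \dim L_2$), and then extending by parallel transport along $\gamma_1$ and $\gamma_2$. Since each $\gamma_i'$ is parallel along $\gamma_i$, this produces an isometry $\theta(t) \colon \nu_1(t) \to \nu_2(t)$ for every $t \in [0,\epsilon_0]$. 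Given $X_1 \in \nu_1(\epsilon)$, let $J_1$ be the $L_1$-Jacobi field along $\gamma_1$ determined by $J_1(\epsilon) = X_1$, $J_1(0) \in T_{\gamma_1(0)} L_1$ and $J_1'(0) + A_{\gamma_1'(0)} J_1(0) \in \nu_{\gamma_1(0)} L_1$; it exists and is unique because $\gamma_1|_{[0,\epsilon_0]}$ carries no $L_1$-focal point. A standard integration by parts using the Jacobi equation then gives
\[
\mathrm{Hess}\, r_1(X_1, X_1) = \langle J_1'(\epsilon), J_1(\epsilon) \rangle_1 = I_1^{\epsilon}(J_1, J_1),
\]
where the mixed-boundary index form on vector fields $V \perp \gamma_i'$ with $V(0) \in T_{\gamma_i(0)} L_i$ is
\[
I_i^{\epsilon}(V, V) := -\langle A_{\gamma_i'(0)} V(0), V(0) \rangle_i + \int_0^\epsilon \bigl(|V'|_i^2 - \langle R_i(V, \gamma_i') \gamma_i', V \rangle_i \bigr)\, dt.
\]

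Next, set $Y_2(t) := \theta(t) J_1(t)$. Because $\theta$ is fiberwise an isometry coming from parallel transport, one has $Y_2 \perp \gamma_2'$, $Y_2(0) \in T_{\gamma_2(0)} L_2$, $Y_2(\epsilon) = \theta(\epsilon) X_1$, $|Y_2|_2 = |J_1|_1$ and $|Y_2'|_2 = |J_1'|_1$. Writing $\langle R_i(V, \gamma_i')\gamma_i', V \rangle_i = |V|_i^2\, K_i(V, \gamma_i')$ on vectors perpendicular to $\gamma_i'$ and applying hypothesis (a) gives $\langle R_1(J_1, \gamma_1')\gamma_1', J_1\rangle_1 \leq \langle R_2(Y_2, \gamma_2')\gamma_2', Y_2\rangle_2$; hypothesis (b), combined with $|Y_2(0)|_2 = |J_1(0)|_1$, gives $\langle A_{\gamma_1'(0)} J_1(0), J_1(0) \rangle_1 \leq \langle A_{\gamma_2'(0)} Y_2(0), Y_2(0) \rangle_2$. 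Adding these termwise yields $I_2^{\epsilon}(Y_2, Y_2) \leq I_1^{\epsilon}(J_1, J_1) = \mathrm{Hess}\, r_1(X_1, X_1)$. The proof is then closed by the minimizing property of the $L_2$-Jacobi field $J_2$ characterized by $J_2(\epsilon) = \theta(\epsilon) X_1$, which gives $\mathrm{Hess}\, r_2(\theta(\epsilon) X_1, \theta(\epsilon) X_1) = I_2^{\epsilon}(J_2, J_2) \leq I_2^{\epsilon}(Y_2, Y_2)$.

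The step requiring the most care is precisely this minimizing property, that is, the mixed-boundary index lemma in $M_2$: among all vector fields $V \perp \gamma_2'$ with $V(0)$ free in $T_{\gamma_2(0)} L_2$ and $V(\epsilon)$ prescribed, the $L_2$-Jacobi field realizes the infimum of $I_2^{\epsilon}$, including the boundary correction $-\langle A_{\gamma_2'(0)} V(0), V(0) \rangle_2$. This is the classical submanifold index lemma; the smoothness of $r_2$ on $U_2$, equivalently the absence of $L_2$-focal points on $[0, \epsilon_0]$, is exactly what guarantees minimality. A minor additional check is that $\theta_0$ can indeed be chosen respecting the tangent-space splitting without any additional matching of principal directions, since hypothesis (b) compares only the extreme eigenvalues of $A_{\gamma_i'(0)}$.
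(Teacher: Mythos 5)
Your proof is correct and follows essentially the same approach the paper indicates: the paper offers no detailed argument, merely noting the result follows from ``classical arguments of index lemma'' with a citation to Greene--Wu, and your write-up is precisely that adaptation, with the parallel-transport construction of $\theta$ matching the Remark the authors place immediately after the lemma.
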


\begin{remark}
Let  $\mathcal{V}_{i}$ be the vector spaces of differentiable vector fields orthogonal to 
$\gamma_{i}$
starting tangent to $T_{\gamma_{i}(0)}L_{i}.$ The isomorphism defined above is 
an isomorphism between $\mathcal{V}_{i}$. 
In fact  $\theta:\mathcal{V}_{1}\to\mathcal{V}_{2}$ is defined as follows:
let $t\to \{e_{i,\totalMindice}(t)\}$ be the parallel transport along $\gamma_{i}$ of an orthogonal basis where
$e_{i,\sliceindice}\in \nu_{\gamma_{i}(0)}L_{i}$,   
$e_{i,\normalindice}\in T_{\gamma_{i}(0)}L_{i}$  and  
 $e_{0}=\gamma_{i}'(0)$. 
For each $V\in\mathcal{V}_{1}$ written as  $V=\sum_{\totalMindice=1}^{\dim M-1} f_{\totalMindice}e_{1,\totalMindice},$ we set
$\theta(V)=\sum_{\totalMindice=1}^{\dim M-1} f_{\totalMindice}e_{2,\totalMindice}.$  
\end{remark}

We intend to apply the above lemma to compare the submanifold $L$ with a fiber of a warped product. 
During this kind of calculation, we will need to understand what happens with $\theta(X_{1})$ when $X_{1}$ is a vector
perpendicular to a slice $S_{1}$ or when it is tangent to $S_{1}$. This will be related to the next result, which 
roughly speaking,  assures us that, under bounded curvature conditions,  if a parallel
vector field $e_{1,\normalindice_{0}}$ along $\gamma_{1}$ starts tangent to $L_{1}$, then (for small time  $s$) 
the parallel    vector field  $e_{2,\normalindice_{0}}:=\theta(e_{1,\normalindice_{0}})$ has small projection into the slice  $S_2$.   
The next result  is  a direct application of classical Rauch's theorem, see \cite[Chapter 10]{Manfredo}. 
 
\begin{lemma}
\label{lemma-parallelandslices}
Assume that $\sup_{ e_{1}(s)\in  \nu_{1}(s)} K_{1} (e_{1},\gamma'_{1})\leq \inf_{ e_{2}(s)\in  \nu_{2}(s)} K_{2}(e_{2},\gamma_{2}')$
where  $ \nu_{i}(s)$ is the space of normal vectors to $\gamma_{i}'(s)$ and $\|e_i\|=1$. 
Let $e_{i,\normalindice_{0}}$ be a parallel unit vector field along  $\gamma_{i}$ so that 
$e_{i,\normalindice_{0}}(0)\in T_{\gamma_{i}(0)}L_{i}$, for $i=1,2$.
Let $J_{i}$ be a Jacobi field along $\gamma_{i}$ so that $J_{i}(0)=0$,  $J_{i}'(0)\in T_{\gamma_{i}(0)}S_{i}$, 
where $S_{i}:=\exp_{\gamma_{i}(0)}(\nu_{\gamma_{i}(0)}L_{i}\cap B_{\epsilon}(0))$ is a slice at $\gamma_i(0)$
 and
$\|J_{1}'(0)\|=\|J_{2}'(0)\|.$ Then there  exists a constant $C$ such that, for $0<s<\epsilon$, we have:
$$ \big|\metric_{2}\big( e_{2,\normalindice_{0}}(s), J_{2}/s \big) \big| \leq C s^{2}.$$
The constant $C$  depends only on $\sup_{[0,r]} \| R_{2} \|$ and $\sup_{[0,r]} \| J_{1}/s\|$ 
(and in particular does not depend on  frames).
 
\end{lemma}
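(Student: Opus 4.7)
The plan is to bound the scalar function $h(s) := \metric_{2}(e_{2,\normalindice_{0}}(s), J_{2}(s))$ by $O(s^{3})$ along $\gamma_{2}$, so that the desired quantity $\metric_{2}(e_{2,\normalindice_{0}}(s), J_{2}/s) = h(s)/s$ is of order $s^{2}$. The strategy is a second order Taylor expansion of $h$ at $0$, with the second derivative controlled via the Jacobi equation and Rauch's comparison theorem applied to $J_{2}$.

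First I would verify that $h(0) = 0$ and $h'(0) = 0$. The first equality is immediate since $J_{2}(0) = 0$. Because $e_{2,\normalindice_{0}}$ is parallel along $\gamma_{2}$ with respect to the Levi--Civita connection of $\metric_{2}$, we have $h'(s) = \metric_{2}(e_{2,\normalindice_{0}}(s), J_{2}'(s))$, hence $h'(0) = \metric_{2}(e_{2,\normalindice_{0}}(0), J_{2}'(0))$. The hypotheses say $e_{2,\normalindice_{0}}(0) \in T_{\gamma_{2}(0)} L_{2}$ while $J_{2}'(0) \in T_{\gamma_{2}(0)} S_{2} = \nu_{\gamma_{2}(0)} L_{2}$, and these subspaces of $T_{\gamma_{2}(0)}M_{2}$ are orthogonal, so $h'(0) = 0$. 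Next, the Jacobi equation $J_{2}'' = -R_{2}(J_{2}, \gamma_{2}')\gamma_{2}'$ gives $h''(s) = -\metric_{2}\bigl(e_{2,\normalindice_{0}}, R_{2}(J_{2},\gamma_{2}')\gamma_{2}'\bigr)$, and Cauchy--Schwarz yields $|h''(s)| \leq \|R_{2}\|_{\gamma_{2}(s)} \cdot \|J_{2}(s)\|$.

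To bound $\|J_{2}(s)\|$ I would invoke Rauch's comparison theorem with the curvature hypothesis $K_{1} \leq K_{2}$ and the matched initial conditions $J_{i}(0)=0$, $\|J_{1}'(0)\|=\|J_{2}'(0)\|$, obtaining $\|J_{2}(s)\| \leq \|J_{1}(s)\| \leq s \cdot \sup_{[0,r]}\|J_{1}/s\|$. Therefore $|h''(s)| \leq c \cdot s$ with $c := \sup_{[0,r]}\|R_{2}\| \cdot \sup_{[0,r]}\|J_{1}/s\|$, and integrating twice using $h(0)=h'(0)=0$ produces $|h(s)| \leq c\, s^{3}/6$. Dividing by $s$ yields the claimed estimate, with $C = c/6$ depending only on the quantities stated in the conclusion. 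The main subtlety is the application of Rauch when $J_{i}'(0)$ has a nontrivial component along $\gamma_{i}'(0)$; I would handle this by orthogonally decomposing $J_{i} = J_{i}^{\top} + J_{i}^{\perp}$ with respect to $\gamma_{i}'$, applying the classical Rauch statement to the $\gamma_{i}'$-normal parts (whose initial derivatives match in norm, as they are the orthogonal projections of the matched $J_{i}'(0)$), and noting that the tangential part $J_{i}^{\top}(s) = \alpha s\, \gamma_{i}'(s)$ has equal norms on both sides and contributes nothing to $h$ since $e_{2,\normalindice_{0}}(0) \perp \gamma_{2}'(0)$.
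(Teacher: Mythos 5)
The paper offers essentially no written proof of this lemma, calling it merely ``a direct application of classical Rauch's theorem,'' and your Taylor-expansion argument --- $h(0)=h'(0)=0$, bound $h''$ via the Jacobi equation and Cauchy--Schwarz, integrate twice --- is the natural and correct way to substantiate that remark; it also produces a constant depending exactly on $\sup\|R_2\|$ and $\sup\|J_1/s\|$ as claimed.

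The one genuine gap is in your patch for the nontrivial tangential component of $J_i'(0)$. You claim the $\gamma_i'$-normal parts of $J_1'(0)$ and $J_2'(0)$ ``match in norm, as they are the orthogonal projections of the matched $J_i'(0)$.'' That inference does not hold: the hypothesis only gives $\|J_1'(0)\|=\|J_2'(0)\|$, while both $J_i'(0)$ and $\gamma_i'(0)$ lie in $\nu_{\gamma_i(0)}L_i$, so the tangential parts $\langle J_i'(0),\gamma_i'(0)\rangle$ can differ between the two sides (e.g.\ $J_1'(0)=\gamma_1'(0)$ and $J_2'(0)\perp\gamma_2'(0)$, both unit; then $(J_1'(0))^{\perp}=0$ while $(J_2'(0))^{\perp}\neq 0$). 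Do Carmo's Rauch theorem (Ch.~10) explicitly requires $\langle J_1'(0),\gamma_1'(0)\rangle=\langle J_2'(0),\gamma_2'(0)\rangle$ in addition to $\|J_1'(0)\|=\|J_2'(0)\|$, and without it the comparison $\|J_2^{\perp}\|\leq\|J_1^{\perp}\|$ can fail outright. The fix is easy and uses an observation you already made: since $e_{2,\normalindice_0}$ is parallel and starts orthogonal to $\gamma_2'(0)$, it stays orthogonal, so only $J_2^{\perp}$ enters $h''$, giving $|h''(s)|\leq \|R_2\|\,\|J_2^{\perp}(s)\|$. Now bound $\|J_2^{\perp}(s)\|$ intrinsically in $M_2$ rather than by comparison with $J_1$: $J_2^{\perp}$ is a normal Jacobi field vanishing at $0$, so a Rauch comparison against the constant-curvature model $K\equiv -\sup\|R_2\|$ (or a Gronwall estimate on the Jacobi equation) yields $\|J_2^{\perp}(s)\|\leq 2s\,\|(J_2'(0))^{\perp}\|$ for $s$ small depending only on $\sup\|R_2\|$. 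Combined with the available inequality $\|(J_2'(0))^{\perp}\|\leq\|J_2'(0)\|=\|J_1'(0)\|\leq\sup_{(0,r]}\|J_1/s\|$, this gives $|h''(s)|\leq 2\sup\|R_2\|\cdot\sup\|J_1/s\|\cdot s$, and the rest of your argument goes through verbatim.
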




\subsection{The proof of Proposition \ref{theorem-main3} }

We start by proving a lemma that we believe can be  useful in the context of immersion theory. 

\begin{lemma}
\label{prop:submanifold}
Let $(M,\metric)$ be a Riemannian manifold with bounded sectional curvature, i.e, 
there is a constant $\Lambda \in \mathbb{R}$ such that $|K_\metric| \leq \Lambda$. 
Let  $L$ be an embedded submanifold with  bounded shape operator. Assume that 
there exists a well defined tubular neighborhood $\mathrm{Tub}_{\epsilon_0}(L)$ for some $\epsilon_0$.  
Then,  given a positive integer number $k$, reducing $\epsilon_{0}$ if necessary, 
there exist positive constants  $C, D$, $c_{1}$ and $c_2$  so that for each $\epsilon<\epsilon_{0}$
and for each immersed submanifold  $N\subset \partial \mathrm{Tub}_{\epsilon}(L)$ so that $\dim N=\dim L+k$ we have:
  \begin{equation}
 -\frac{D}{r(x)} - c_2 \leq {\rm tr} (A_{\nabla r}) \leq -\frac{C}{r(x)} + c_1,
\end{equation}
where $A_{\nabla r}$ is the shape operator of 
the immersed submanifold $N\subset \partial \mathrm{Tub}_{\epsilon}(L)$ and 
$r(x) = d(L,x)$ is the distance between $L$ and $x\in N.$
\end{lemma}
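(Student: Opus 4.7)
The plan is to estimate $\mathrm{tr}(A_{\nabla r})=-\sum_{i=1}^{\dim N}\mathrm{Hess}\,r(e_i,e_i)$ for an orthonormal basis $\{e_i\}$ of $T_xN$, controlling the Hessian direction by direction and then invoking a dimension count. Let $\pi\colon \mathrm{Tub}_{\epsilon_0}(L)\to L$ denote the metric projection. At $x\in\partial\mathrm{Tub}_\epsilon(L)$, split the tangent space orthogonally as $T_x\partial\mathrm{Tub}_\epsilon(L)=\mathcal{S}_x\oplus \mathcal{P}_x$, where $\mathcal{S}_x$ is tangent to the slice-sphere $\exp_{\pi(x)}(\nu_{\pi(x)}L\cap \partial B_\epsilon(0))$ and $\mathcal{P}_x$ is its orthogonal complement inside $T_x\partial\mathrm{Tub}_\epsilon(L)$, so that $\dim \mathcal{S}_x=\dim M-\dim L-1$ and $\dim \mathcal{P}_x=\dim L$. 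Writing $e_i=e_i^S+e_i^P$ relative to this splitting, I would establish the purely linear-algebraic sandwich
$$k\;\le\;\sum_i\|e_i^S\|^2\;\le\;\dim L+k,$$
using $\sum_i\|e_i^S\|^2=\mathrm{tr}_{T_xN}(P_{\mathcal{S}_x})=\dim N-\mathrm{tr}_{T_xN}(P_{\mathcal{P}_x})$ together with $0\le \mathrm{tr}_{T_xN}(P_{\mathcal{P}_x})\le \min(\dim N,\dim \mathcal{P}_x)=\dim L$.

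Next, the Hessian of $r$ is controlled via its Riccati-type evolution along radial geodesics, whose initial condition is encoded by the shape operator of $L$. Using Lemmas \ref{lem:comparison} and \ref{lemma-parallelandslices} to compare $(M,g)$ against a model warped product of constant curvature $\pm\Lambda$ over $L$, the hypotheses $|K_g|\le \Lambda$ and bounded shape operator of $L$ supply the curvature and principal-curvature inequalities required, and for $\epsilon_0$ small enough one obtains
$$\Big|\mathrm{Hess}\,r(v,v)-\tfrac{1}{r}\|v\|^2\Big|\le \kappa_1 r\,\|v\|^2\;\;(v\in\mathcal{S}_x),\qquad |\mathrm{Hess}\,r(w,w)|\le \kappa_2\|w\|^2\;\;(w\in\mathcal{P}_x),$$
with constants $\kappa_1,\kappa_2$ depending only on $\Lambda$, $\sup_L\|A\|$ and $\epsilon_0$. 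For the cross terms, a Jacobi-field computation based on Lemma \ref{lemma-parallelandslices} shows that the pairing between a parallel transport of a $TL$-vector and a Jacobi field representing a slice-sphere direction vanishes to order $r^2$ along the radial geodesic; inserting this into the Jacobi-field formula for $\mathrm{Hess}\,r$ yields
$$|\mathrm{Hess}\,r(v,w)|\le \kappa_3\,r\,\|v\|\,\|w\|\qquad(v\in\mathcal{S}_x,\;w\in\mathcal{P}_x).$$

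Expanding $\mathrm{Hess}\,r(e_i,e_i)=\mathrm{Hess}\,r(e_i^S,e_i^S)+2\mathrm{Hess}\,r(e_i^S,e_i^P)+\mathrm{Hess}\,r(e_i^P,e_i^P)$, summing over $i$, and combining the three estimates above with $\|e_i^S\|^2+\|e_i^P\|^2=1$ and the dimensional sandwich yields
$$\tfrac{k}{r}-\tilde c_2\;\le\;\sum_i\mathrm{Hess}\,r(e_i,e_i)\;\le\;\tfrac{\dim L+k}{r}+\tilde c_1,$$
so, multiplying by $-1$, the claim follows with $C=k$ and $D=\dim L+k$, the $r$-linear remainders being absorbed into $c_1,c_2$ once $\epsilon_0$ is fixed. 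The main obstacle I anticipate is the cross-term estimate: unlike the foliated setting of Lemma \ref{lemma-constant-R-C}, where Lemma \ref{lemma1-semilocalstructure}(a) forces the horizontal distribution to lie in the kernel of the Sasaki-metric shape operator, here $N$ is an arbitrary immersed submanifold whose tangent space need not respect the $\mathcal{S}\oplus \mathcal{P}$ splitting, so the required smallness cannot come from a foliated structure and must be extracted purely from the bounded-geometry comparison afforded by Lemma \ref{lemma-parallelandslices}, forcing the constants to depend on $\Lambda$ and on the sup norm of the shape operator of $L$.
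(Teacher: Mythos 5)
Your proposal is recognizably of the same family as the paper's proof, but the execution is genuinely different. The paper transports the orthonormal frame $\{e_n\}$ into an explicit model $M_2 = \mathbb{S}^d_R \times L$ endowed with $\widetilde{\metric}=\metric_{\mathbb{S}^d_R}\times e^{2\phi}\metric_L$, decomposes $\theta(e_n)=e_n^{\top}+e_n^{\perp}$ \emph{on the model side}, and exploits the warped-product structure to get $\mathrm{Hess}\,r_2(e_n^{\top},e_n^{\perp})=0$: cross terms never appear, at the price of an explicit computation of $\mathrm{Hess}\,r_2$ in the model. You instead decompose $e_i=e_i^S+e_i^P$ in the slice-sphere splitting of $T_x\partial\Tub_\epsilon(L)$ inside $M$ and must estimate the cross terms $\mathrm{Hess}\,r(e_i^S,e_i^P)$ directly. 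That does work: Lemma~\ref{lemma-parallelandslices} supplies an $O(r^2)$ inclination between parallel transports of $TL$-vectors and slice Jacobi fields, and combined with $\mathrm{Hess}\,r(J_v(r),w)=\metric(J_v'(r),w)$ and the scaling $\|J_v'(0)\|\sim 1/r$ it yields the $O(r)$ bound you assert (in fact an $O(1)$ bound would already suffice for the final inequality). Your dimensional sandwich $k \le \sum_i \|e_i^S\|^2 \le \dim N$, derived from $\mathrm{tr}_{T_xN}(P_{\mathcal{P}_x})\le\dim\mathcal{P}_x=\dim L$, is a welcome clarification of the paper's step $\sum_n\|e_n^\perp\|^2\ge C$, which the paper attributes to Lemma~\ref{lemma-parallelandslices} without displaying a dimension count.

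The one place you should be more careful is the change of splittings. The isomorphism $\theta$ in Lemma~\ref{lem:comparison} is built from parallel transport, so that lemma compares Hessians along the parallel-transport decomposition of $T_x\partial\Tub_\epsilon(L)$ into parallel transports of $\nu_{\gamma(0)}L$ and of $T_{\gamma(0)}L$, \emph{not} along $\mathcal{S}_x\oplus\mathcal{P}_x$. Your diagonal estimates in the $\mathcal{S}$- and $\mathcal{P}$-directions therefore implicitly pass between the two splittings, and the $O(r^2)$-inclination from Lemma~\ref{lemma-parallelandslices} (together with Lemma~\ref{bounded-Tau} for the leaf part, which is where the bounded-shape-operator hypothesis enters) is exactly what renders this passage harmless; spelling it out would make the argument airtight. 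In summary: the paper pays for a specially designed warped-product model in exchange for identically vanishing cross terms, while your route pays with a cross-term and change-of-splitting analysis in exchange for a transparent dimension count; both are valid.
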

\begin{remark}
\
\begin{enumerate}
    \item Lemma \ref{prop:submanifold} can be thought as a natural generalization of  \cite[Chapter 6, Theorem 27]{petersen}.
    \item In the particular case where $N$ coincides with the cylinder, the above lemma gives an estimate
 of mean curvature of cylinders. 
\end{enumerate}
\end{remark}
\begin{proof}[Proof of Lemma \ref{prop:submanifold}]
In what follows we are going to prove that ${\rm tr} (A_{\nabla r}) \leq -\frac{C}{r(x)} + c_1,$ i.e., the part of the lemma that will be used in the proof of Proposition \ref{theorem-main3}.  

We start with a tubular neighborhood $\mathrm{Tub}_{\epsilon_0}(L)$ around $L$ 
such that the map $p \mapsto d(p,L)$ is smooth on $\mathrm{Tub}_{\epsilon_0}(L)\setminus L$. Assume that $\epsilon_0$ is such that $\gamma$ has no focal points to $L$ on $\mathrm{Tub}_{\epsilon_0}(L)$ and denote by $s\to \gamma(s)$ an arc length parametrized geodesic that is perpendicular to $L$ at $s = 0$ and that realizes the distance between $L$ and $x\in N$ i.e, $r(x) = d(x,L) = s$. 

Let $\mathcal{B} := \left\{e_{\totalNindice}\right\}$ be an orthonormal  frame  consisting of tangent vectors to $N$ at $\gamma(s) = x.$ Hence,
\begin{equation}
    -{\rm tr}(A_{\nabla r}) =\sum_{\totalNindice=1}^{\dim N}\metric( \nabla_{e_\totalNindice}\nabla r,e_\totalNindice) 
		= \sum_{\totalNindice=1}^{\dim N}\mathrm{Hess}~r(e_{\totalNindice},e_{\totalNindice}).
\end{equation}

Let $\mathbb{S}^d_{R}$ denote the sphere of radius $R$ with the round metric 
 where $d$ stands for the codimension of $L$ on $M$. 
Let $s\to\gamma_{2}(s)$ be an arc length geodesic in $\mathbb{S}_{R}^{d}$ 
starting at north pole $q_{N}=\gamma_{2}(0) \in \mathbb{S}_{R}^{d}$ and set 
$q_{0}:=\gamma_{2}(-s_{0})\in\mathbb{S}_{R}^{d}.$
Consider $\mathbb{S}_{R}^d \times L$ endowed with the metric of warped product 
\[ \widetilde{\metric}=\metric_{\mathbb{S}_{R}^d}\times e^{2\phi}\metric_L,\]
where $\metric_L$ is the induced metric on $L$ from $\metric$ and
\[\phi(p) := \lambda d^2(p,q_0),~\lambda <0,~q_0 \in \mathbb{S}_R^d.\]

By applying the explicit calculations of sectional curvature of warped product metrics 
presented in \cite[Proposition 2.2.2, pg 59]{gw}, 
and choosing 
$|\lambda|$ and $R$ big enough and 
$q_0$ close enought to
the north pole $q_N \in \mathbb{S}_{R}^d,$ one can check the next claim.

\begin{claim}\label{claim:quepermite}
There exist real numbers $R >0$ and $\lambda<0$
and a point $q_0\in \mathbb{S}_{R}^d$ so that

\[\big(M_{1}:=\left(\tub_{\epsilon_{0}}(L),\metric \right),~L_1 := L , \, \gamma_1:=\gamma \big)\]
\[\big( M_2:=\left(\mathbb{S}_{R}^d\times L, \tilde{\metric}\right), ~L_2:= \{q_N\}\times L,  \, \gamma_2 \big) \]
fulfill all the requirements of Lemma \ref{lem:comparison}. 
\end{claim}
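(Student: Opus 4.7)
The plan is to verify hypotheses (a) and (b) of Lemma~\ref{lem:comparison} by inserting the explicit warping $\phi(p) = \lambda\, d^2(p,q_0)$ into the warped-product curvature formulas of \cite[Proposition~2.2.2]{gw}, and then checking that the three free parameters $|\lambda|$, $R$, and $s_0 := d(q_N,q_0)$ can be chosen so that both conditions hold simultaneously. Let $\Lambda$ denote the bound $|K_\metric| \le \Lambda$ from the hypothesis of Lemma~\ref{prop:submanifold}, and let $\Lambda_L$ be an upper bound for the principal curvatures of $L \subset M$; both are finite.

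First, I would fix the model geodesic to be $\gamma_2(s)=(\gamma_2^B(s),p_L)$ with $\gamma_2^B$ the unit radial geodesic in $\mathbb{S}^d_R$ from $q_0$, so that $\gamma_2(0)=(q_N,p_L)$ and $\gamma_2(-s_0)=(q_0,p_L)$. This is legitimate because each slice $\mathbb{S}^d_R \times \{p\}$ is totally geodesic in any warped product of the form $\mathbb{S}^d_R \times_f L$. Consequently $\gamma_2'(s)$ is horizontal, and $\nu_2(s)$ splits as ``base-tangent and perpendicular to $\gamma_2'$'' plus ``fiber-tangent''; only two types of $K_2(e_2,\gamma_2')$ appear, namely base planes (contributing $1/R^2$) and mixed planes (contributing $-\mathrm{Hess}^B(f)(\gamma_2',\gamma_2')/f$ with $f=e^\phi$). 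A radial computation along $\gamma_2^B$, using $r_0(\gamma_2^B(s)) = s_0+s$, yields $d\phi(\gamma_2') = 2\lambda(s_0+s)$ and $\mathrm{Hess}(\phi)(\gamma_2',\gamma_2') = 2\lambda$, so
\[
\frac{\mathrm{Hess}^B(f)(\gamma_2',\gamma_2')}{f} = 2\lambda + 4\lambda^2(s_0+s)^2,
\]
whence $K_{\mathrm{mix}} = 2|\lambda|\bigl(1 - 2|\lambda|(s_0+s)^2\bigr)$. Thus hypothesis (a) reduces to $1/R^2 \ge \Lambda$ together with $2|\lambda|\bigl(1 - 2|\lambda|(s_0+s)^2\bigr) \ge \Lambda$ for every $s\in[0,\epsilon_0]$.

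For hypothesis (b), the fiber $\{q_N\}\times L$ is totally umbilic in the warped product, with shape operator with respect to the unit normal $\gamma_2'(0)$ equal to $-d\phi(\gamma_2'(0))\,\mathrm{Id} = 2|\lambda|s_0\,\mathrm{Id}$. Hence (b) becomes $2|\lambda|s_0 \ge \Lambda_L$, i.e.\ $s_0 \ge \Lambda_L/(2|\lambda|)$.

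The main technical point, and the step I expect to require the most care, is reconciling the two constraints on $s_0$. The shape operator bound forces $s_0 \gtrsim 1/|\lambda|$, while the mixed-curvature bound forces $s_0+\epsilon_0 \lesssim 1/\sqrt{|\lambda|}$; since $1/|\lambda| \ll 1/\sqrt{|\lambda|}$ as $|\lambda|\to\infty$, an open window of admissible $s_0$ opens up once $|\lambda|$ is large enough. The order of choice is therefore: first pick $|\lambda|$ large (so that $2|\lambda| \ge 2\Lambda$ and $\Lambda_L/(2|\lambda|) < 1/(2\sqrt{|\lambda|})$), then $s_0$ in the resulting window (which forces $q_0$ close to $q_N$), then $R$ satisfying $1/R^2 \ge \Lambda$, and finally $\epsilon_0$ small enough that $2|\lambda|(s_0+\epsilon_0)^2 \le 1/2$. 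These choices realize both hypotheses of Lemma~\ref{lem:comparison}.
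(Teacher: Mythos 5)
Your argument is correct and carries out exactly the approach the paper indicates in one line (apply the warped-product curvature formulas of \cite[Prop.~2.2.2]{gw} and choose the parameters $|\lambda|$, $R$, $q_0$ suitably), making explicit the two curvature constraints and the resulting window for $s_0$. One small remark: your constraint $1/R^2\ge\Lambda$ correctly forces $R$ to be \emph{small}, since hypothesis (a) requires the model $M_2$ to have larger sectional curvature than $M_1$; the paper's informal phrase that one should choose ``$|\lambda|$ and $R$ big enough'' appears to be a slip, and your version is the accurate one.
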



Let $U_2 := \mathrm{Tub}_{\epsilon_0}(L_{2})$  be a smooth tubular neighborhood of $L_{2}$ on $M_2$ and consider the distance function
\[r_2 : U_2\setminus L_{2} \to \mathbb{R},\]
\[r_2 : p \mapsto d_{\widetilde g}(p, L_{2}),\]

For each $\totalNindice \in \{1,\ldots,\dim N\}$, 
\[\theta (e_{\totalNindice}) = e_{\totalNindice}^{\top} + e_{\totalNindice}^{\perp},~e_{\totalNindice} \in \mathcal{B},\]
where $e_{\totalNindice}^{\top}$ denotes the component that is tangent to $L$ and $e_{\totalNindice}^{\perp}$ the component that is tangent to the base of the warped product.

By the comparison Lemma \ref{lem:comparison},
\begin{equation}\label{eq:comparinghess}\mathrm{Hess}~r(e_{\totalNindice},e_{\totalNindice}) \geq \mathrm{Hess}~r_2(e_{\totalNindice}^{\top},e_{\totalNindice}^{\top}) + \mathrm{Hess}~r_2(e_{\totalNindice}^{\perp},e_{\totalNindice}^{\perp}).\end{equation}
The base of the warped product is the round sphere, thus
\begin{equation}
    \mathrm{Hess}~r_{2}(e_{\totalNindice}^{\perp},e_{\totalNindice}^{\perp}) = 
		\dfrac{1}{R}\cot\left(\dfrac{r_2}{R}\right)\|e_{\totalNindice}^{\perp}\|^2,
\end{equation}
furthermore, from \cite[Proposition 2.2.2, pg 59]{gw} 
\begin{equation}
    \mathrm{Hess}~r_{2}(e_{\totalNindice}^{\top},e_{\totalNindice}^{\top}) = 2\lambda e^{2\lambda(s+s_0)^2}(s+s_0)\|e_{\totalNindice}^{\top}\|^2,~\lambda < 0.
\end{equation}
\begin{equation}
    \mathrm{Hess}~r(e_{\totalNindice},e_{\totalNindice}) \geq 2\lambda(\epsilon_0 + s_0)e^{2\lambda(\epsilon_0+s_0)^2}\|e_{\totalNindice}^{\top}\|^2 + \dfrac{1}{R}\cot\left(\dfrac{r_2}{R}\right)\|e_{\totalNindice}^{\perp}\|^2.\nonumber
\end{equation}
Summing up  and using the definition of ${\rm tr}(A_{\nabla r})$ one has
\begin{equation}
     {\rm tr}(A_{\nabla r})\leq - 2\lambda(\epsilon_0 + s_0)e^{2\lambda(\epsilon_0+s_0)^2}\sum_{\totalNindice}\|e_{\totalNindice}^{\top}\|^2 - \dfrac{1}{R}\cot\left(\dfrac{r_2}{R}\right)\sum_{\totalNindice}\|e_{\totalNindice}^{\perp}\|^2.\nonumber
\end{equation}
and hence, we conclude
\begin{equation}
    {\rm tr}(A_{\nabla r})\leq -2\lambda(\epsilon_0 + s_0)e^{2\lambda(\epsilon_0+s_0)^2}\sum_{\totalNindice}\|e_{\totalNindice}^{\top}\|^2 - \frac{1}{R}\cot\left(\dfrac{ r}{R}\right)\sum_{\totalNindice}\|e_{\totalNindice}^{\perp}\|^2.
\end{equation}

Writing  $\cot\left(\dfrac{r}{R}\right) = \dfrac{R}{r} - O(r),$ where $O(r) > 0,$ we have,
\begin{equation}
     {\rm tr}(A_{\nabla r})\leq -2\lambda(\epsilon_0 + s_0)e^{2\lambda(\epsilon_0+s_0)^2}\sum_{\totalNindice}\|e_{\totalNindice}^{\top}\|^2 - \frac{1}{R}\left(\dfrac{R}{r} - O(\epsilon_0)\right)\sum_{\totalNindice}\|e_{\totalNindice}^{\perp}\|^2.
\end{equation}

Since, for every $\totalNindice\in \{1,\ldots,\dim N\},$ one has $\|e_{\totalNindice}^{\top}\|, \|e_{\totalNindice}^{\perp}\| \leq 1,$ we define
\begin{equation}
    c_1 := -2\lambda(\epsilon_0 + s_0)e^{2\lambda(\epsilon_0+s_0)^2}\dim N + \dfrac{1}{R}O(\epsilon_0)\dim N.
\end{equation}
to infer  that
\begin{equation}
    {\rm tr}(A_{\nabla r})\leq c_1 - \dfrac{1}{r}\sum_{\totalNindice}\|e_{\totalNindice}^{\perp}\|^2.
\end{equation}

Now, reducing $\epsilon_0$ if necessary, we  evoke Lemma \ref{lemma-parallelandslices} to conclude that there is $C> 0$ such that

\begin{equation}
\sum_{\totalNindice}\|e_{\totalNindice}^{\perp}\|^2\geq \sum_{\sliceindice}\|e_{\sliceindice}^{\perp}\|^2  \geq C,
\end{equation}
therefore,
\begin{equation}
    {\rm tr}(A_{\nabla r})\leq c_1- \dfrac{C}{r(x)},
\end{equation}
finishing  the proof of the main  part of the lemma 
that will be used in the proof of Proposition \ref{theorem-main3}.  The proof of  
$-\frac{D}{r(x)} - c_2 \leq {\rm tr} (A_{\nabla r})$ can be done in a similar way,
considering a warped metric on $\mathbb{H}_{R}^{d}\times L.$
\end{proof}

From now on, we let $(M,\mathcal F,N,\metric)$ be a setup just as in the hypothesis of Proposition \ref{theorem-main3}. 
Starting by recalling  that the MCF $t\to N(t)$ of $N$   preserves  the dimension of $N(t)$ for $t<T.$
This fact, and the same argument in Section \ref{subsectionproof-item (a)} allow us to reduce the proof of Proposition \ref{theorem-main3} to check again the existence of  basins of attraction, i.e., an adaptation of Lemma \ref{newlemma_r_t}.  
It is a straightforward consequence of Lemma \ref{prop:submanifold} 
with $L = L_q$ and $N = N(t)$ that there exists $\epsilon > 0$ such that 
    \begin{enumerate}
        \item there is a constant $C_1 >0$ depending only on the $\mathrm{Tub}_{\epsilon}(L_q)$ such that if $N(t_0) \in \mathrm{Tub}_{\epsilon}(L_q)$ for some $t_0 > 0,$ then 
        $r^2(t_0) - r^2(t) \geq C_1^2(t-t_0),~\forall t \in [t_0,T),$ $ \, \, \mathrm{and} \, \, T<\infty.$ 
     \item $N(t) \subset \mathrm{Tub}_{\epsilon}(L_q)~ \forall t \in [t_0,T).$
      \end{enumerate}

\section{Appendix: Inclination of the distribution $\mathcal{T}$ }
\label{Appendix-inclination-tau}
In this section we present the proof of
Lemma \ref{bounded-Tau}. Roughly speaking, this lemma and   
 Lemma  \ref{lemma-parallelandslices}  
assure  us  that,
under bounded curvature conditions and small $\epsilon$, 
the ``inclination'' between the distribution   $\mathcal{T}$(defined in Section \ref{section-sasakimetric}) 
and a slice $S_{\epsilon}(\tilde{q})$ is bounded, and this bound does not depend on $\tilde{q}\in L_q.$
\begin{lemma}
\label{bounded-Tau}
Assume that $M$ has bounded curvature and the shape operator of each leaf of $\F$ on $M$ is bounded. 
Then for each leaf $L_q$ and small $\epsilon>0$ 
 there exists a radius $r_0$ with the following property: if
\begin{itemize}
\item  $X$ is a unit vector tangent to $L_{q}$ at a point $\widetilde{q}$,  
\item $\gamma$ is  an unit speed geodesic  orthogonal to $L_{q}$ at 
$\widetilde{q}=\gamma(0),$ 
\item $s\to J(s)$ is the (unique) Jacobi field along $\gamma$ so that $J(0)=X$ and $J(s)\in \mathcal{T}_{\gamma(s)},$
\item $ s\to \overline{e}(s)$ is a parallel vector field along $\gamma$
so that $\overline{e}(0)$ is normal to $L_{q}$ and $\|\overline{e}\|=1$.
\end{itemize}
Then $ | \|J(s)\| -1 |<\epsilon  \, \, \mathrm{and} \, \,  |\metric(J(s),\overline{e})| <\epsilon$ 
for each $s\in [0,r_0]$.  
\end{lemma}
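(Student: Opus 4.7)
The plan is to prove the lemma by comparing the Jacobi field $J(s)$ with the parallel transport $\widetilde{X}(s)$ of $X$ along $\gamma$, and to show that the two remain close on a uniform interval $[0, r_0]$ whose length depends only on the curvature bound of $M$ and the shape operator bound of the leaves, hence not on $\widetilde{q} \in L_q$.

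First I would observe that both conclusions hold trivially at $s=0$: since $J(0) = X$ is a unit vector tangent to $L_q$ at $\widetilde{q}$, and $\overline{e}(0)$ is a unit vector normal to $L_q$ at $\widetilde{q}$, we have $\|J(0)\| = 1$ and $\metric(J(0), \overline{e}(0)) = 0$. Differentiating along $\gamma$ and using that $\overline{e}$ is parallel,
\begin{equation*}
\frac{d}{ds}\|J(s)\|^2 = 2\,\metric(J'(s), J(s)), \qquad \frac{d}{ds}\metric(J(s), \overline{e}(s)) = \metric(J'(s), \overline{e}(s)),
\end{equation*}
so the goal reduces to obtaining a uniform bound on $\|J'(s)\|$ and $\|J(s)\|$ over a uniform interval.

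Next, using the Jacobi equation $\nabla_{\gamma'}^2 J = -R(J, \gamma')\gamma'$ together with the curvature bound $\|R\| \leq \Lambda$ from the hypothesis, I would deduce $\|J''(s)\| \leq \Lambda \|J(s)\|$. Writing $J$ as the variational field of a geodesic variation $f(t,s) = \exp_{\beta(t)}(s\,\xi(t))$, where $\beta$ is a curve in $L_q$ with $\beta'(0) = X$ and $\xi$ is a unit normal field along $\beta$ chosen so that $\partial_t f(t,\cdot)|_{t=0}$ stays tangent to $\mathcal{T}$, one obtains the identity $J'(0) = \nabla_X \xi$. Decomposing this vector into its components tangent and normal to $L_q$ at $\widetilde{q}$, the tangential component is controlled by the shape operator of $L_q$ in direction $\gamma'(0)$, while the normal component is controlled by the induced connection on $\nu(L_q)$. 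Both quantities are uniformly bounded under the hypotheses of bounded curvature and bounded shape operator, yielding a constant $K$ with $\|J'(0)\| \leq K$ independent of $\widetilde{q}$, $X$, and $\gamma'(0)$.

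Finally, setting $Y(s) := J(s) - \widetilde{X}(s)$, one has $Y(0) = 0$, $\|Y'(0)\| = \|J'(0)\| \leq K$, and $Y'' = -R(J,\gamma')\gamma'$. A standard Gronwall-type argument then gives $\|Y(s)\| \leq C(K, \Lambda)\, s$ for $s$ in an interval $[0, r_0]$ whose length depends only on $K$ and $\Lambda$. Since $\|\widetilde{X}(s)\| = 1$ and $\metric(\widetilde{X}(s), \overline{e}(s)) = 0$ (both being parallel transports of vectors that are orthonormal at $s=0$), we obtain
\begin{equation*}
\bigl|\|J(s)\| - 1\bigr| \leq \|Y(s)\|, \qquad |\metric(J(s), \overline{e}(s))| = |\metric(Y(s), \overline{e}(s))| \leq \|Y(s)\|,
\end{equation*}
and shrinking $r_0$ so that $C(K,\Lambda)\, r_0 < \epsilon$ concludes the proof. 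The main obstacle is the uniform bound $\|J'(0)\| \leq K$: it is exactly here that the hypotheses on bounded curvature and bounded shape operators of the leaves are used in an essential way, to control the initial covariant derivative in terms of geometric invariants of the foliation rather than of the particular basepoint $\widetilde{q}$.
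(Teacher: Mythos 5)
Your overall scaffolding---set up the Jacobi ODE, bound $J'(0)$ uniformly in $\widetilde q$, then run a Gronwall estimate to control $J(s)$ on a uniform interval---is the same scaffolding the paper uses (the paper phrases the Gronwall step as a compactness/flow argument for the first-order system $y'' + R(s)y = 0$, but the effect is the same). The gap is in the step you yourself flag as ``the main obstacle'': the uniform bound $\|J'(0)\|\leq K$. You decompose $J'(0)=\nabla_X\xi$ into its tangential and normal parts and assert that the tangential part is controlled by the shape operator (correct, via the Weingarten formula) and that the normal part is ``controlled by the induced connection on $\nu(L_q)$'' and hence ``uniformly bounded under the hypotheses of bounded curvature and bounded shape operator.'' That last assertion is exactly what needs to be proved, and it does not follow from those two hypotheses. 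The normal component $(\nabla_X\xi)^\nu = \nabla^\nu_X\xi$ is a derivative of the \emph{specific} normal field $\xi$ that keeps the variation tangent to $\mathcal T$; neither a curvature bound on $M$ nor a shape-operator bound on the leaves says anything a priori about how $\mathcal T$ twists in the normal directions. In the setting of this lemma $L_q$ need not be compact (only $M/\F$ is), so you cannot appeal to compactness of $L_q$ to get uniformity in $\widetilde q$.

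The paper closes this gap using the structure of the linearized foliation $\F^\ell$. It first treats the case where $\widetilde q$ lies in the same plaque as a fixed base point $q$: extending $X$ to a linearized vector field $\vec X^\ell$ whose flow $\varphi_t$ sends normal fibers to normal fibers by \emph{linear isometries}, the Jacobi field $J=J_{t_0}$ at $\widetilde q = \varphi_{t_0}(q)$ is related to a Jacobi field $J_0$ based at $q$ by $J_t = d\varphi_t J_0$, and the isometry property gives
\begin{equation*}
\Bigl\|\Bigl(\tfrac{\nabla}{\partial s}J_{t_0}(0)\Bigr)^{\nu}\Bigr\| = \Bigl\|\Bigl(\tfrac{\nabla}{\partial s}J_{0}(0)\Bigr)^{\nu}\Bigr\|.
\end{equation*}
At the \emph{fixed} point $q$, the set of admissible initial data is compact, so the right-hand side is bounded by a constant $c_1$ independent of $\widetilde q$. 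Combining this with the shape-operator bound for the tangential part gives $\|J'(0)\|\leq \sqrt{\|A\|^2 + c_1^2}$, and the general case follows by composing linearized flows. This transport-by-linearized-flow step is the content of the lemma, and it is missing from your argument; without it, the uniformity of $K$ in $\widetilde q$ is unjustified.
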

\begin{proof}
 Our strategy consists in bounding the inicial conditions of $J$ by
comparing them with inicial conditions of a  Jacobi field along a geodesic $\gamma_0$ starting at $q$, stressing that they do not depend
on $\widetilde{q}\in L_{q}$.

We first assume that $\widetilde{q}$ and $q$ are in the same  plaque
of $L_q$  and that
exists a  unit vector field $\vec{X}$ on this plaque so that  $\vec{X}(\widetilde{q})=X$.
We can then extend this vector field $\vec{X}$ to be tangent to the leaves near the plaque of $L_q$ and 
linearize this vector field, producing a linearized vector field $\vec{X}^{\ell}$; see \cite{Alexandrino-Inagaki-Struchiner,Alexandrino-Radeschi-Molino}. Let $\varphi$ be  the flow
of  $\vec{X}^{\ell}$. As explained in \cite{Alexandrino-Inagaki-Struchiner,Alexandrino-Radeschi-Molino} this flow has the following properties:
\begin{enumerate}
\item[(a)] The flow $\varphi$ sends fiber to fiber of the normal bundle $\nu(L_q)$;
\item[(b)] $\varphi_{t}: \nu_{p}(L_{q})\to \nu_{\varphi_{t}(p)}(L_{q})$ is a linear isometry;
\item[(c)]  For each $p\in L_{q}$ near $\widetilde{q}$   we have $\vec{X}(p)=\frac{d}{d t}\varphi_{t}(p)|_{t=0}$. 
\end{enumerate}

Let us denote $t_0$ the time so that $\widetilde{q}=\varphi_{t_{0}}(q)$ and $\gamma_0$ the geodesic defined as 
$\gamma=\varphi^{t_{0}}\circ\gamma_0$.  Define the geodesic variations $f(t,s):=\varphi_{t}\circ\gamma_{0}(s)$ and 
the associated Jacobi field $J_{t} (\cdot) :=\frac{\partial}{\partial t} f(t,\cdot).$ Hence $J=J_{t_0}$.
Also note that $J_{t}(s)=d \varphi_{t} J_{0}(s).$ Using property (b) described above, we can infer: 
 \begin{equation}
\label{eq-DerivadaCovariantenormal}
d\varphi_{t_{0}}\Big(\frac{\nabla}{\partial s}J_{0}(0)\Big)^{\nu}= \Big(\frac{\nabla}{\partial s}J_{t_{0}}(0)\Big)^{\nu},
\end{equation} 
where $(\cdot)^{\nu}$ is the normal component. In fact, since $\varphi_{t}: \nu_{p}(L_{q})\to \nu_{\varphi_{t}(p)}(L_{q})$ is a linear isometry
we have that $\frac{\partial }{\partial s} f(t,0)=\exp(t\xi)\frac{\partial }{\partial s} f(0,0)$ where the matrix exponential
$\exp(t\xi)$ is written with respect to some parallel frame (with respect to the normal connection $\nabla^{\nu}$)
along the curve $t\to f(t,0)$. 
 
\begin{eqnarray*}
\Big(\frac{\nabla}{\partial s}J_{t_{0}}(0)\Big)^{\nu}&=& \Big(\frac{\nabla}{\partial s}\frac{\partial}{\partial t} f(t_{0},0)  \Big)^{\nu}\\
 &=&\frac{\nabla}{\partial t}^{\nu}\frac{\partial}{\partial s} f(t_{0},0)  \\
&=& \frac{d}{d t}\exp(t\xi)\frac{\partial }{\partial s} f(0,0)|_{t=t_{0}}\\
&=&\frac{d}{d t}\exp(t_{0}\xi)\exp(t\xi)\frac{\partial }{\partial s} f(0,0)|_{t=0}\\
&=&\exp(t_{0}\xi)\frac{d}{d t}\exp(t\xi)\frac{\partial }{\partial s} f(0,0)|_{t=0}\\
&=& exp(t_{0}\xi)\frac{\nabla}{\partial t}^{\nu}\frac{\partial}{\partial s} f(0,0)  \\
&=& exp(t_{0}\xi) \Big(\frac{\nabla}{\partial s}\frac{\partial}{\partial t} f(0,0)  \Big)^{\nu}\\
&=&d\varphi^{t_{0}}\Big(\frac{\nabla}{\partial s}J_{0}(0)\Big)^{\nu}
\end{eqnarray*}

From Eq. \eqref{eq-DerivadaCovariantenormal} we infer: 
\begin{equation}
\label{eq-2-DerivadaCovariantenormal}
\Big\|\Big(\frac{\nabla}{\partial s}J_{0}(0)\Big)^{\nu}\Big\|= \Big\|\Big(\frac{\nabla}{\partial s}J_{t_{0}}(0)\Big)^{\nu}\Big\|
\end{equation}
On the other hand, note  that each Jacobi field  $\widetilde{J}_0$ along $\gamma_0$ tangent to $\mathcal{T}$ is unique determined 
by $\widetilde{J}_{0}(0)$ and hence, for each of such Jacobi field with $\|\widetilde{J}_{0} (0)\|=1$, we can infer that there exists
a constant $c_1$ such that
\begin{equation}  
\label{eq-2-5-DerivadaCovariantenormal}
\Big\|\Big(\frac{\nabla}{\partial s}\widetilde{J}_{0}(0)\Big)^{\nu}\Big\|\leq \Big\|\frac{\nabla}{\partial s}\widetilde{J}_{0}(0)\Big\|\leq c_1. 
\end{equation}
Also recall that for each Jacobi field $\widetilde{J}_{t}$ along $\gamma_t$ that is $L_q$-Jacobi field one has
\begin{equation}
\label{eq-2-6-DerivadaCovariantenormal}
 \Big(\frac{\nabla}{\partial s}\widetilde{J}_{t}(0)\Big)^{\top}= -A_{\gamma'_{t}(0)} \widetilde{J}_{t}(0), 
\end{equation}
where $(\cdot)^{\top}$ is the tangent component. 
The fact that the shape operator along $L_q$ is bounded by $\|A \|$ and 
Eq. \eqref{eq-2-DerivadaCovariantenormal},  \eqref{eq-2-5-DerivadaCovariantenormal}
and \eqref{eq-2-6-DerivadaCovariantenormal} imply:
\begin{equation}
\label{eq-3-DerivadaCovariantenormal}
\Big\|\Big(\frac{\nabla}{\partial s}J_{t_{0}}(0)\Big)\Big\| \leq
\sqrt{ \|A\|^{2}+ c_{1}^{2}}.
\end{equation}
It is important to note that the constant $c_2:=\sqrt{ \|A\|^{2}+ c_{1}^{2}}$ does not depend on the point $\widetilde{q}$.
We infer then that for  Jacobi fields $\widetilde{J}$ along $\gamma$ with $\|\widetilde{J}(0)\|=1$  so that $\widetilde{J}$ 
are tangent to $\mathcal{T}$  have  inicial conditions bounded by constants that do not depend on  the point $\widetilde{q}$. 

Now consider a parallel frame $\{\overline{e}_{\totalMindice}\}$ along $\gamma$ so that $\overline{e}_{\normalindice}(0)\in T_{\widetilde{q}}L_{q}$
and $\overline{e}_{\sliceindice}(0)\in \nu_{\widetilde{q}}L_{q}.$ Let $y_{\totalMindice}$ the functions so that
 $J_{t}(s)=\sum_{\totalMindice} y_{\totalMindice}(s) \overline{e}_{\totalMindice}.$ Then the Jacobi equation can be written (in this frame) as
$y''(s)+ R(s)y(s)=0$.  Set $\mathcal{R}(q,\dot{q},s)=(\dot{q}, R(s)q,1)\in\mathfrak{X}(\mathbb{R}^{2n+1}).$

\begin{claim}
\label{claim-appendix}
 Consider the flow $\varphi^{\mathcal{R}}$ of the vector field   $\mathcal{R},$ the compact set 
$I:=\mathbb{S}_{1}^{n-k-1}\times \{0\}\times \overline{B_{c_{2}}(0)}\times \{ 0\}\subset \mathbb{R}^{n-k}\times\mathbb{R}^{k}\times\mathbb{R}^{n}\times \mathbb{R}$
that contains the inicial conditions, and  $r_1$ so that $I\subset B_{r_{1}}(0)\times [0,r_{1})\subset \mathbb{R}^{2n}\times\mathbb{R}.$ 
Then we can find a time $s_1$ 
so that $\varphi^{\mathcal{R}}_{s}(x)\subset B_{r_{1}}(0)\times [0,r_{1}) \subset \mathbb{R}^{2n}\times\mathbb{R}$  for  each $s\in[0,s_{1}]$ 
and $x\in I$. 
In particular, since the curvature is bounded and $\overline{B_{r_{1}}}(0)\times [0,r_{1}]$ is compact, there exists a constant $c_3$
(that does not depend on $\widetilde{q}$) 
so that  $|\mathcal{R}\circ\varphi_{s}^{\mathcal{R}}(x)|< c_{3}$. 
 \end{claim}

It follows from Claim \ref{claim-appendix} that
\begin{eqnarray*}
\big|\varphi^{\mathcal{R}}_{s}(x)-\varphi^{\mathcal{R}}_{0}(x)\big| &\leq & \big|\int_{0}^{s} \frac{d}{d t}\varphi_{t}^{\mathcal{R}}(x) dt\big| \\
&\leq & \int_{0}^{s} |\mathcal{R}\circ\varphi^{\mathcal{R}}_{t}(x)| dt\\
&\leq & s c_{3}. 
\end{eqnarray*}
for $x\in I.$ 
Set  $\pi_{1}(q,\dot{q},s)=q.$ The above equation and triangle inequality  imply that there exists $r_0$ so that for $0<s<r_{0}$ 
we have for all $x\in I$
\begin{eqnarray}
\label{eq-4-DerivadaCovariantenormal}
1-\epsilon  \leq  \big|\pi_{1}(\varphi^{\mathcal{R}}_{s}(x)) \big|  \leq  1+\epsilon
\end{eqnarray}

\begin{eqnarray}
\label{eq-5-DerivadaCovariantenormal} 
\big| \langle \varphi_{s}^{\mathcal{R}}(x), e_{\sliceindice}\rangle \big|\leq \frac{\epsilon}{k}.
\end{eqnarray}

Eq. \eqref{eq-4-DerivadaCovariantenormal} and \eqref{eq-5-DerivadaCovariantenormal}
conclude the proof of the lemma in the case where $\widetilde{q}$ is the same plaque of $q$. 
The general case follows direct from compositions of flows of linearized flows.  

\end{proof}
\begin{ack}
The authors are grateful to Marco Radeschi, Miguel Dom\'{\i}nguez V\'{a}zquez, 
Dirk T\"{o}ben, Francisco Caramello and Francisco J. Gozzi  for helpful discussion and suggestions.
\end{ack}



\bibliographystyle{amsplain}

\end{document}